\newtheorem{theorem}{Theorem}[section]
\newtheorem{lemma}[theorem]{Lemma}
\newtheorem{proposition}[theorem]{Proposition}
\theoremstyle{definition}
\theoremstyle{remark}
\newcommand{\bbR}{\mathbb{R}}
\newcommand{\R}{\mathbb{R}}
\newcommand{\be}{\mathbf{e}}
\newcommand{\bu}{\mathbf{u}}
\newcommand{\bv}{\mathbf{v}}
\newcommand{\bw}{\mathbf{w}}
\newcommand{\bx}{\mathbf{x}}
\newcommand{\cN}{\mathcal{N}}
\newcommand{\tA}{\mathbf{A}}
\newcommand{\tP}{\mathbf{P}}
\newcommand{\tV}{\mathbf{V}}
\newcommand{\tX}{\mathbf{X}}
\newcommand{\tY}{\mathbf{Y}}
\newcommand{\tZ}{\mathbf{Z}}
\newcommand{\tU}{\mathbf{U}}
\newcommand{\bzero}{\mathbf{0}}
\newcommand{\bone}{\mathbf{1}}
\newcommand{\tr}{\mathrm{tr}}
\renewcommand{\vec}{\mathrm{vec}}
\newcommand*{\rom}[1]{\expandafter\@slowromancap\romannumeral #1@}
\title{Exactness Conditions for Semidefinite Relaxations of the Quadratic Assignment Problem}
\author[1]{Junyu Chen \thanks{Email: chenjunyu@u.nus.edu}}
\author[1,2]{Yong Sheng Soh}
\affil[1]{Department of Mathematics, National University of Singapore, Singapore 119076}
\affil[2]{Institute of Operations Research and Analytics, National University of Singapore, Singapore 119076}
\begin{document}

\maketitle

\begin{abstract}
The Quadratic Assignment Problem (QAP) is an important discrete optimization instance that encompasses many well-known combinatorial optimization problems, and has applications in a wide range of areas such as logistics and computer vision.  The QAP, unfortunately, is NP-hard to solve.  To address this difficulty, a number of semidefinite relaxations of the QAP have been developed.  These techniques are known to be powerful in that they compute globally optimal solutions in many instances, and are often deployed as sub-routines within enumerative procedures for solving QAPs.  In this paper, we investigate the strength of these semidefinite relaxations.  Our main result is a deterministic set of conditions on the input matrices -- specified via linear inequalities -- under which these semidefinite relaxations are exact.  Our result is simple to state, in the hope that it serves as a foundation for subsequent studies on semidefinite relaxations of the QAP as well as other related combinatorial optimization problems.  As a corollary, we show that the semidefinite relaxation is exact under a perturbation model whereby the input matrices differ by a suitably small perturbation term.  One technical difficulty we encounter is that the set of dual feasible solutions is not closed.  To circumvent these difficulties, our analysis constructs a sequence of dual feasible solutions whose objective value approaches the primal objective, but never attains it.  
\end{abstract}

\emph{Keywords}: Graph matching, graph isomorphism problem, spectrahedral shadows

\section{Introduction}

Suppose there are $n$ facilities and a set of $n$ locations.  For each pair of locations we are given information about the {\em distance} between these locations, and for each pair of facilities we are given information about the amount of {\em flow} between these facilities.  The quadratic assignment problem seeks an assignment of facilities to locations so as to minimize the total flow, weighted by the distances.  

Concretely, let $A$ and $B$ be $n \times n$ dimensional real matrices and denote by $[n]$ the set $\{1,2,\ldots,n\}$.  The quadratic assignment problem (QAP) seeks the optimal permutation $\sigma : [n] \rightarrow [n]$ that minimizes the following sum
\begin{equation}
\underset{\sigma : [n] \rightarrow [n]}{\min} ~~ \sum_{1\leq i,j \leq n} A_{ij} B_{\sigma(i)\sigma(j)}.
\end{equation}
The matrix $A$ models the amount of flow between facilities, while the matrix $B$ models the distances between locations.  In what follows, we will only be concerned with the mathematical description of the QAP, and will make no further mention of the model interpretation.  To simplify some of the exposition later, we will make the additional assumptions that $A$ and $B$ are symmetric matrices.

The QAP can be expressed as an optimization over the set of permutation matrices.  Define $\text{Perm}(n) := \{ X : X^T X = XX^T = I, X_{ij} \in \{0,1\} \} \subset \R^{n\times n}$.  Then the QAP is equivalent to the following
\begin{equation} \label{eq:qap}
    \min_{X} ~ \tr (X A X^T B) \qquad \text{s.t.} \qquad X \in \text{Perm}(n).
\end{equation}
Note that the objective can be expressed as $\tr (X A X^T B) = \sum_{ijkl} A_{ij} B_{kl} X_{ik} X_{jl}$.  There is in fact a more general formulation of the QAP in which one replaces the terms $A_{ij} B_{kl}$ with a general cost tensor $C_{ijkl} \in \mathbb{R}^{n \times n \times n \times n}$.  Most of our results will apply to this generalization in a straightforward fashion.

The QAP is an important and widely studied optimization instance.  Part of the reason why it is important is because many combinatorial optimization problems can be modeled as instances of a QAP; prominent examples include the graph isomorphism problem, the sub-graph isomorphism problem, and shape correspondence problems \cite{shapecorr}.  %, and the traveling salesman problem \cite{tspandmst}}.  
Unfortunately, the family of QAPs is NP-Hard \cite{qapnphard}.  In fact, this notion of hardness also extends to a practical sense in the sense that real life QAP instances are generally regarded as being very difficult to solve in practice \cite{qapsolution,reducibility,qapsurvey} -- a QAP instance of about 40-50 is generally considered to be state of the art \cite{admmqapsdp}.

\subsection{Semidefinite Relaxations}  

A prominent approach for solving QAPs involves the use of semidefinite relaxations.  Concretely, semidefinite programs (SDPs) are convex optimization instances in which one minimizes linear objectives over convex constraint sets specified as the intersection of the cone of positive semidefinite matrices (PSD) and an affine space.  The typical manner in which semidefinite relaxations are deployed in the context of QAPs is to expand the feasible region to one that can be described efficiently via semidefinite programming.  These provide lower bounds for the QAP that can be efficiently computed, and hence semidefinite relaxations can be used as a sub-routine within enumerative schemes for solving QAPs.

Semidefinite relaxations for QAPs are among the most powerful relaxations known in the sense that these methods tend to provide some of the best known bounds on certain test datasets.  The main downside of these methods, unfortunately, is that they remain prohibitively expensive to be practically feasible.  To this end, there has been a line of work focused on developing alternative weaker relaxations -- often by selectively omitting certain constraints -- that are easier to scale, but provide relatively strong relaxations on most instances.

The goal of this paper is to understand the {\em strength} of these semidefinite relaxations.  Our main result is to provide a simple condition that describe QAP instances for which these semidefinite relaxations are {\em exact}; that is, the optimal value obtained by these relaxations equals that of the QAP.  Our answer has important practical implications -- a by-product of our analyses is that we identify instances of the QAP that can be broadly considered as `easy instances' of the QAP as these are solvable using semidefinite relaxations.   

In a short while, we describe the semidefinite relaxation we focus on in this paper.  First, let $\text{DS}(n)$ be the set of doubly stochastic matrices
$$
\text{DS}(n) := \{ X : X \bone = \bone, X^T \bone = \bone, X \geq 0 \} \subset \R^{n\times n}.
$$
As a reminder, the Birkhoff-von Neumann theorem tell us that the set of doubly stochastic matrices is the convex hull of the set of permutation matrices $\text{DS}(n) = \mathrm{conv}(\text{Perm}(n))$.  In the above, $\bone = (1,\ldots,1)^T$ is the vector whose entries are all ones.  

Note that $ \tr (X A X^T B) = \langle X, B X A \rangle = \langle \vec (X), \vec (B X A) \rangle = \langle \vec (X), A \otimes B ~ \vec (X) \rangle$, where the symbol $\otimes$ is used to denote the Kronecker product.  As such, \eqref{eq:qap} is equivalent to 
\begin{equation} \label{eq:qap2}
\begin{aligned}
\min_{X,\tX} \qquad & \langle A \otimes B, \tX \rangle \\
  \text{s.t.} \qquad & \left(\begin{array}{cc}
    \tX & \text{vec}(X)\\
    \text{vec}(X)^T & 1
    \end{array}\right) \succeq 0,  \text{ is rank-one}\\
    & X \in \text{Perm} (n)
\end{aligned}.
\end{equation}
Here, we use $A\succeq B$ to denote $A-B$ is positive semidefinite (PSD).  We introduce the following notation for operators $\tX \in \mathbb{R}^{n^2 \times n^2}$; specifically, we denote
$$
\tX_{(i,j),(k,l)} \quad \text{where} \quad i,j,k,l \in [n]
$$
such that if $\tX = \vec(X) \vec(X)^T$ (that is, the rank-one constraint is active), then one has
$$
\tX_{(i,j),(k,l)} = X_{ij} X_{kl}.
$$
The formulation \eqref{eq:qap2} is an alternative formulation of the QAP in which the objective is {\em linear}.  The feasible region are those in which the matrix is rank-one, and these take the form
$$
\left \{ \left(\begin{array}{c}
\text{vec}(X)\\ 1
\end{array}\right) 
\left(\begin{array}{c}
\text{vec}(X)\\ 1
\end{array}\right)^T : X \in \text{Perm}(n) \right \}.
$$
Unfortunately, the convex hull of this set is {\em not} tractable to describe.

In this paper, we focus on the following SDP relaxation
\begin{equation}\label{eq:qap-sdp_temporarytag} %\tag{QAP-SDP}
\begin{aligned}
\min_{ \tX, X } \qquad & \langle A \otimes B, \tX \rangle \\
\text{s.t.} \qquad & \left(\begin{array}{cc}
    \tX & \text{vec}(X)\\
    \text{vec}(X)^T & 1
    \end{array}\right) \succeq 0 \\
    & \sum_{i,j} \tX_{(i,k),(j,l)} = 1, \sum_{i,j} \tX_{(k,i),(l,j)} = 1 \text{ for all } k, l \\
    & \tX_{(i,k),(j,k)} = 0, \tX_{(k,i),(k,j)} = 0  \text{ if } i \neq j \\
    & \sum_j \tX_{(i,j),(i,j)} = 1, \sum_j \tX_{(j,i),(j,i)} = 1 \text{ for all } i\\
    & \tX_{(i,j),(i,j)} = X_{ij} \\
    & \tX \ge 0 \\
    & X \in \text{DS}(n) 
\end{aligned}.
\end{equation}

%The QAP instance \eqref{eq:qap-sdp_temporarytag} can be viewed in a certain sense as the most powerful ?? instance ??  Even though there are a large family of competing semidefinite relaxations, we focus exclusively on this formulation precisely because this is the most powerful, and so that any insight we derive will provide useful intuition for alternative weaker formulations.

The relaxation \eqref{eq:qap-sdp_temporarytag} is obtained by omitting the rank-one constraint in \eqref{eq:qap2}, and by subsequently adding a number of valid (in-)equalities that hold whenever $\tX = \vec(X) \vec(X)^T$, and $X$ is a permutation matrix.  First, the equality in the second row is valid because $ \sum_{i,k} \tX_{(i,j),(k,l)} = (\sum_{i} X_{ij}) (\sum_k X_{kl}) =  1$.  Second, the equalities in the third and fourth rows are consequences of orthogonality.  Specifically, if $X$ is orthogonal, then $\sum_k X_{ik} X_{jk} = \delta_{ij}$.  In the case where $i \neq j$, the additional constraint that $\tX \geq 0$ forces $\tX_{(i,k),(j,k)} = 0$, which are the equalities in the third row.  This forcing condition is also referred to the {\em Gangster operator}.  Last, the equality $\tX_{(i,j),(i,j)} = X_{ij}$ is valid because $X_{ij} \in \{0,1\}$ and hence $X_{ij}X_{ij}=X_{ij}$.  This constraint is sometimes referred to as the {\em arrow} condition. 

{\em Eliminating redundant constraints.}  The semidefinite relaxation \eqref{eq:qap-sdp_temporarytag} contains a number of redundant constraints that can be omitted.  First, the set of constraints $\sum_j \tX_{(i,j),(i,j)} = 1$ is redundant.  To see why, recall the constraint $\sum_{i,j} \tX_{(k,i),(k,j)} = 1$.  Among these terms in the sum, those of the form $\tX_{(k,i),(k,j)}$ are zero whenever $i \neq j$.  This implies $\sum_j \tX_{(i,j),(i,j)} = 1$.  (An identical set of arguments imply $\sum_j \tX_{(j,i),(j,i)} = 1$ is redundant.)  Second, the arrow condition $\tX_{(i,j),(i,j)} = X_{ij}$ is also redundant; however, this step is less straightforward to establish.  We prove this result in Appendix \ref{append:redundancy}. %\textcolor{red}{[I was trying to refer the readers to \cite{zhao1998semidefinite} for proof of the redundancy of the arrow condition. But there is no direct result in \cite{zhao1998semidefinite} saying this. So I prove it myself.]} 
% First we have $\sum_j \tX_{(i,j),(i,l)} = X_{il}$.  From the Gangster condition one has $\tX_{(i,j),(i,l)} = 0$ if $j \neq l$.  
% Hence this implies $\tX_{(i,l),(i,l)} = X_{il}$.  
In our subsequent discussion, 
% we omit the set of constraints $\sum_j \tX_{(i,j),(i,j)} = 1$ and the arrow condition in our formulation \eqref{eq:qap-sdp_temporarytag}, 
we omit these redundant constraints, and consider the following equivalent but simplified relaxation:

\begin{equation} \label{eq:qap-sdp}
\begin{aligned}
\min_{ \tX, X } \qquad & \langle A \otimes B, \tX \rangle \\
\text{s.t.} \qquad & \left(\begin{array}{cc}
    \tX & \text{vec}(X)\\
    \text{vec}(X)^T & 1
    \end{array}\right) \succeq 0 \\
    & \sum_{i,j} \tX_{(i,k),(j,l)} = 1, \sum_{i,j} \tX_{(k,i),(l,j)} = 1 \text{ for all } k, l \in [n]\\
    & \tX_{(i,k),(j,k)} = 0, \tX_{(k,i),(k,j)} = 0  \text{ if } i \neq j \\
    & \tX \ge 0 \\
    & X \in \text{DS}(n) 
\end{aligned}.
\end{equation}

%\textcolor{red}{[I was trying to refer the readers to \cite{zhao1998semidefinite} for proof of the redundancy of the arrow condition. But there is no direct result in \cite{zhao1998semidefinite} saying this. So I prove it myself.]} 
% First we have $\sum_j \tX_{(i,j),(i,l)} = X_{il}$.  From the Gangster condition one has $\tX_{(i,j),(i,l)} = 0$ if $j \neq l$.  
% Hence this implies $\tX_{(i,l),(i,l)} = X_{il}$.  

\if0
\begin{proposition} \label{thm:qap-sdp-lb}
Let $X \in \text{Perm} (n)$.  Then the tuple $(X, \vec(X)\vec(X)^T)$ is a feasible solution to \eqref{eq:qap-sdp_temporarytag}.  In particular, this means that the optimal value of \eqref{eq:qap-sdp_temporarytag} is always a lower bound to the optimal value of \eqref{eq:qap2}.
\end{proposition}

\begin{proof}
First, note that
$$
\left(\begin{array}{cc}
    \vec(X)\vec(X)^T & \text{vec}(X)\\
    \text{vec}(X)^T & 1
    \end{array}\right) 
= \left(\begin{array}{c}
    \vec(X) \\ 1
    \end{array}\right)
\left(\begin{array}{c}
    \vec(X) \\ 1
    \end{array}\right)^T
\succeq 0.
$$

Second, we note that $(\bone^T \otimes I) \cdot \tX = ((\bone^T \otimes I)\vec(X))\vec(X)^T = \vec(IX\bone)\vec(X)^T = \bone \vec(X)^T$.  Similarly, $(I \otimes \bone^T) \cdot \tX = ((I \otimes \bone^T)\vec(X))\vec(X)^T = \vec(\bone^T X I ) \vec(X)^T = \vec(\bone^T) \vec(X)^T = \bone \vec(X)^T$.  

Third, since $X \geq 0$, one has $\vec(X)\vec(X)^T \geq 0$.

Fourth, since $X$ is orthogonal, we have $\sum_{k} X_{ik} X_{jk} = \delta_{ij}$.  This means the condition $\tX_{(ik),(jk)} = 0$  when $i \neq j$ and the condition $\sum_k \tX_{(ik),(ik)} = 1$ both hold.
\end{proof}
\fi

\if0
{\bf GW-SDP Relaxation.}  An SDP relaxation for the Gromov-Wasserstein problem is the following
\begin{equation}\label{eq:gw-sdp}\tag{GW-SDP}
\begin{aligned}
\min_{ \tX, X } \qquad & \langle A \otimes B, \tX \rangle \\
\text{s.t.} \qquad & \left(\begin{array}{cc}
    \tX & \text{vec}(X)\\
    \text{vec}(X)^T & 1
    \end{array}\right) \succeq 0 \\
    & \left(\begin{array}{cc}
    \bone^T \otimes I \\
    I \otimes \bone^T
    \end{array}\right) \tX = \left(\begin{array}{cc}
        \bone \\ \bone
    \end{array}\right) \text{vec}(X)^T \\
    & \tX \ge 0 \\
	& X \in \text{DS}(n) 
\end{aligned}.
\end{equation}
\fi

\section{Our Contributions and Related Work}

\subsection{Our Contributions}  

The objective of this paper is to investigate the {\em strength} of the relaxation \eqref{eq:qap-sdp}.  Our main result is Theorem \ref{thm:exactness_indeterminate}, which is a set of deterministic conditions -- specified via linear inequalities -- on matrices $A$ and $B$ under which the relaxation \eqref{eq:qap-sdp} is {\em exact}.  The main utility of our result is that the conditions are simple to state, and as we suggest in Section \ref{sec:numerics}, characterizes a large proportion of the instances for which the relaxation \eqref{eq:qap-sdp} is exact.  Our hope is that Theorem \ref{thm:exactness_indeterminate} as well as the proof techniques introduced in this paper can serve as the foundation for subsequent work to prove exactness of semidefinite relaxations for specific applications of the QAP, such as the graph matching problem.

\subsection{Related Work}  

We briefly describe related work and our contributions in relation to these works.  

{\bf Exactness conditions for QCQPs.} The first body of work that closely relates to ours concerns those that study conditions under which semidefinite relaxations for certain computationally intractable problems are exact.  One important example is the family of quadratically constrained quadratic programs (QCQPs) -- these are problems where the objective are quadratic functions and where the constraints are specified by quadratic inequalities.  In fact, the QAP can be specified as an instance of a QCQP.  

To derive semidefinite relaxations for QCQPs, one applies a lift analogous to \eqref{eq:qap2} whereby one replaces quadratic variables $\bx\bx^T$ with the matrix $X$, imposes the following PSD constraint
$$
\left( \begin{array}{cc} X & \bx \\ \bx^T & 1 \end{array}\right) \succeq 0,
$$
and omit the rank-one condition.  This approach is frequently referred to as the {\em Shor relaxation} \cite{shorrelax}.  There are recent works that provide conditions under which these relaxations are exact for general QCQPs  \cite{burer2020exact,wang2022tightness}.  Unfortunately, these results are not directly applicable to studying semidefinite relaxations for the QAP such as \eqref{eq:qap-sdp} because the vanilla Shor relaxations of the QAP are far too weak to be useful.  Besides QCQPs, there are related works that provide exactness conditions for semidefinite relaxations for other computational problems such as matrix completion \cite{svt,exactmccandesfocm,exactmccandesacm}, phase retrieval \cite{candes2015phase}, group synchronization \cite{bandeira2018randomsychro,ling2022improvedsynchro,zhong2018nearoptsynchro}, data clustering \cite{relaxclusteringawasthi,certifiablekmeans,certifyspectralclustering,rujeerapaiboon2019size} and graph coloring \cite{lovaszthetagraphcoloring}.

%Although semidefinite programming (SDP) relaxations are notably powerful across various applications, their exactness remains less understood. Recent research has yielded promising results, demonstrating that under specific assumptions about the data, the SDP relaxations of general quadratically constrained quadratic programs (QCQPs) can be exact \cite{wang2022tightness,burer2020exact}.  However, these findings primarily pertain to standard SDP relaxations of QCQPs -- relaxations derived by lifting the variables and dropping the rank constraint. In contrast, the SDP relaxation of the QAP \eqref{eq:qap-sdp_temporarytag} can be viewed as the standard SDP relaxation of a QP instance augmented with additional constraints to enforce a desired structure in the optimal solution. Beyond these general results, specialized SDP relaxations have been studied for various specific problems, such as phase retrieval \cite{candes2015phase}, matrix completion \cite{recht2011matrixcompletion}, synchronization \cite{bandeira2018randomsychro,ling2022improvedsynchro,zhong2018nearoptsynchro} and clustering \cite{rujeerapaiboon2019size}, providing sufficient conditions for the SDP relaxations to be exact. A very recent work studies the exactness of SDP relaxations of QAP under a signal-plus-noise model \cite{ling2024exactnessqapsdp}, showing that the SDP relaxation recovers the ground true permutation when the signal-to-noise ratio is sufficiently large. 

{\bf Graph matching.}  Let $\mathcal{G}_A$ and $\mathcal{G}_B$ be two undirected graphs over $n$ vertices.  The graph matching problem concerns finding a mapping between these sets of vertices so as to maximize the overlap between edges.  Suppose one is specifically interested in maximizing for the {\em number} of overlapping edges.  Then the objective can be expressed as
\begin{equation} \label{eq:graphmatching_quadratic}
\min_{X \in \mathrm{Perm}(n)} \| X A - B X \|_F^2,
\end{equation}
where $A$ and $B$ are the (symmetric) adjacency matrices of $\mathcal{G}_A$ and $\mathcal{G}_B$ respectively.  By expanding the above objective and eliminating constants, the graph alignment instance can be rewritten as the following QAP instance
\begin{equation} \label{eq:qap_graphmatching}
\min_{X \in \mathrm{Perm}(n)} ~~ - \mathrm{tr}(X A X^T B).
\end{equation}
As compared to QAPs, graph alignment problems have a lot more structure -- namely, the matrices $A$ and $B$ must be binary.  Nevertheless, the graph alignment problem contains, for instance, the sub-graph isomorphism problem as a special case, which is known to be NP-hard.

There is a substantial body of work on graph matching problems, and in particular, approaches based on semidefinite relaxations of the QAP instance \eqref{eq:qap_graphmatching}.  Very frequently, these computational techniques are also applicable to solving more general QAP instances where the matrices $A$ and $B$ need not be binary, and as such we discuss the QAP and graph matching problems jointly.  More recently, the graph matching problem has been widely explored across various statistical models, with the primary goal of determining the noise levels at which an efficient algorithm can correctly identify the optimal underlying permutation \cite{spectralGMgaussian,spectralGMerdos,randomGMimproved,exactrandomGM}.  (On this note, we recommend the introduction in \cite{partialrecoveryhall} for an excellent treatment of these problems.)

{\bf Semidefinite relaxations for the QAP and the graph matching problem.} The SDP relaxation we study \eqref{eq:qap-sdp} is mathematically equivalent to the formulation suggested by Zhao et. al. \cite{zhao1998semidefinite} (it is also equivalent to Relaxation II in \cite{ling2024exactnessqapsdp}).  It is widely known to be a powerful relaxation -- unfortunately, the specific formulation is expensive to implement in practice.  To this end, a number of weaker relaxations that seek to drop certain constraints while mitigating the drop in performance have been suggested.  Some of these weaker relaxations include using a simple spectral relaxation \cite{leordeanu2005spectral}, relaxing the set of permutation matrices to the set of doubly stochastic matrices \cite{aflalo2015convex,lyzinski2015graph}, and alternative semidefinite relaxations \cite{bernard2018ds,bravo2018semidefinite,de2012improvedsdp,ds++,kezurer2015tight,povh2009copositive,zhao1998semidefinite}.

{\bf Exactness conditions.}  In \cite{aflalo2015convex}, the authors study a relaxation of the graph matching problem in which they replace the constraint set in \eqref{eq:graphmatching_quadratic} with the set of doubly stochastic matrices.  The authors show that this relaxation is exact if (i) the underlying graphs are isomorphic, and (ii) the adjacency matrix has no repeated eigenvalues and no eigenvector is orthogonal to the all-ones vector $\bone$.

A very recent paper that appeared as we were preparing our manuscript investigates extends these conditions within a signal corruption model \cite{ling2024exactnessqapsdp}.  Concretely, the work investigates the relaxation \eqref{eq:qap-sdp} and assumes that the matrices satisfy $B \approx - A + \Delta$, where $\Delta$ is suitably small.  (As a reminder, if $B=-A$, then the identity matrix $I$ is the optimal solution.)  The key result in \cite{ling2024exactnessqapsdp} is that the semidefinite relaxation \eqref{eq:qap-sdp} is exact and the identity matrix $I$ is the unique optimal solution if the matrix $A$ has distinct eigenvalues and the perturbation term $\Delta$ satisfies
\begin{equation} \label{eq:ling_result}
\min_{i\neq j} (\lambda_i - \lambda_j)^2 \min_i \langle \bu_i, \bone \rangle^2 \geq n (\|\Delta\| \|A \| + \| A \Delta\|_{\max}).
\end{equation}
Here, $\|\cdot\|$ denotes the spectral norm, $\|\cdot\|_{\max}$ denotes the maximum absolute value, while $\{\lambda_i,\bu_i\}$ are the eigenvalues and eigenvectors of the matrix $A$.  In particular, the result \eqref{eq:ling_result} can be viewed as a robust extension to the results in \cite{aflalo2015convex}.

%Graph matching problem, which corresponds to the case where $A$ and $-B$ in \eqref{eq:qap} (or $-A$ and $B$) are adjacency matrices, is an important special case of QAP. \footnote{The objective function of this relaxation is $\|\Pi A - B \Pi\|^2_F$, which is convex and equivalent to QAP$(A,-B)$ (the objective function of \eqref{eq:qap} with $A$ and $-B$) when $\Pi$ is a permutation matrix} \cite{aflalo2015convex}.  In contrast, we show that the SDP relaxation allows for the second condition to be dropped.

{\bf Our work in relation to prior work.}  How does our work differ from these prior works?  At a high level, our objective is quite different from prior work.  Our aim is to characterize (to the extent we are able) the collection of matrices for which the semidefinite relaxation \eqref{eq:qap-sdp} is exact, which we believe is more ambitious than prior works that deal with specific generative models.  At a technical level, our result in Theorem \ref{thm:exactness_indeterminate} avoids the key assumptions in \cite{aflalo2015convex,ling2024exactnessqapsdp} -- namely, it does not require the eigenvalues to be distinct nor does it require eigenvectors to contain some component in the direction $\bone$.  This aspect is particularly relevant in certain applications such as graph matching problems as the input matrices may violate the spectral conditions imposed by these prior works.  The flipside is that our result only guarantees that the objective value of the relaxation is equal to the QAP objective -- we do not provide guarantees that the optimal solution is unique and is equal to a permutation.  In Section \ref{sec:geom} we discuss an example where the spectrum of $A$ contains repeated eigenvalues, where a subset of the eigenvectors are orthogonal to $\bone$, and where our result certifies exactness.  We show in this particular example that the solution is not unique.  We think there might be a connection between spectral conditions imposed by prior works and the semidefinite relaxation \eqref{eq:qap-sdp} having a unique optimal solution.  Finally, the proof technique of \cite{ling2024exactnessqapsdp} relies on constructing a dual feasible solution.  In this paper, we show that the set of dual solutions is in fact {\em not} closed, and in particular, there are matrices $A$ and $B$ that require the construction of a sequence of dual feasible solutions whose objective converges towards, but never attains the primal objective!  This phenomenon is peculiar to semidefinite programs in that the feasible regions of semidefinite programs -- these are known as {\em spectrahedral shadows} -- need not be closed \cite{Netzer:10}.  In particular, we believe that our proof techniques may be of broader interest especially to researchers studying semidefinite relaxations.

%In this paper, we extend the research on semidefinite relaxations of QAP initiated by \cite{ling2024exactnessqapsdp}. Analogous to \cite{ling2024exactnessqapsdp}, our proofs are completed by constructing dual variables that guarantee global optimality, a well-established method for proving exactness. However, while \cite{ling2024exactnessqapsdp} focuses on the signal-plus-noise model, our objective is to characterize the full range of matrices $A$ and $B$ that ensure exactness. Furthermore, we address scenarios where the optimal dual solutions may not be attainable.
\section{Main Results}

Our main result is a collection of deterministic conditions on pairs of matrices $A$ and $B$ for which the semidefinite relaxation in \eqref{eq:qap-sdp_temporarytag} is exact.  For these conditions to be meaningful they have to (i) be relatively easy to describe, and (ii) they should capture the bulk of instances for which \eqref{eq:qap-sdp} is indeed exact.  To simplify the exposition, we only state the conditions under which the optimal solution to \eqref{eq:qap-sdp} is the tuple $(I, \vec(I) \vec(I)^T)$.  It is straightforward to extend these results via a suitable permutation of the indices to settings where the optimal solution is $(X, \vec(X) \vec(X)^T)$ for some $X \in \text{Perm}(n)$.
% \textcolor{red}{[$\bu^{(ij)}$ and $\bv^{(kl)}$ are not defined for $i=j,k=l$.]}
% \begin{theorem} \label{thm:exactness_indeterminate}
% Let $A, B \in \mathbb{R}^{n\times n}$ be symmetric matrices whose diagonal entries are zero.  Suppose there is a collection of vectors $\{\bu^{(ij)}\}_{i \neq j}$, $\{\bv^{(kl)}\}_{k \neq l} \subset \mathbb{R}^{n}$ satisfying
% \begin{enumerate}
% \item $u^{(ij)}_k+u^{(ij)}_l + v^{(kl)}_i+v^{(kl)}_j \leq 4 A_{ij} B_{kl}$ for all ($i \neq j,k \neq l$) and all ($i=j,k=l$), 
% \item $u^{(ij)}_i+u^{(ij)}_j + v^{(ij)}_i+v^{(ij)}_j = 4 A_{ij} B_{ij}$ for all $i \neq j$, 
% \item $\sum_{i\neq j,k\neq l} u^{(ij)}_{\sigma(k)}+u^{(ij)}_{\sigma(l)}+ v^{(kl)}_{\sigma^{-1}(i)}+v^{(kl)}_{\sigma^{-1}(j)} \geq 4 \sum_{i \neq j} A_{ij} B_{ij}$ for all permutations $\sigma$.
% \end{enumerate}
% Then the tuple $(X,\tX) = (I, \vec(I) \vec(I)^T)$ is an optimal solution to \eqref{eq:qap-sdp_temporarytag}.
% \end{theorem}

\begin{theorem}\label{thm:exactness_indeterminate}
Let $A, B \in \mathbb{R}^{n\times n}$ be symmetric.  Suppose there is a collection of vectors $\{\bu^{(ij)}\}_{i,j \in [n]}$, $\{\bv^{(kl)}\}_{k,l \in [n]} \subset \mathbb{R}^{n}$ satisfying
\begin{enumerate}
\item $u^{(ij)}_k+u^{(ij)}_l + v^{(kl)}_i+v^{(kl)}_j \leq A_{ij} B_{kl}$ for all ($i \neq j,k \neq l$) and all ($i=j,k=l$), 
\item $u^{(ij)}_i+u^{(ij)}_j + v^{(ij)}_i+v^{(ij)}_j = A_{ij} B_{ij}$ for all $i, j$, 
% \item $\sum_{i,j} \left( u^{(ij)}_{\sigma(i)}+u^{(ij)}_{\sigma(j)} \right) + \sum_{k,l} \left( v^{(kl)}_{\sigma^{-1}(k)}+v^{(kl)}_{\sigma^{-1}(l)} \right) \geq 4 \sum_{i,j} A_{ij} B_{ij}$ for all permutations $\sigma$.
\item $\sum_{i,j}  u^{(ij)}_{\sigma(i)}+u^{(ij)}_{\sigma(j)} + v^{(ij)}_{\sigma^{-1}(i)}+v^{(ij)}_{\sigma^{-1}(j)} \geq \sum_{i,j} A_{ij} B_{ij}$ for all permutations $\sigma$.
\end{enumerate}
Then the tuple $(X,\tX) = (I, \vec(I) \vec(I)^T)$ is an optimal solution to \eqref{eq:qap-sdp}.
\end{theorem}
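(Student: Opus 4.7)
The plan is to establish, for every feasible $(\tX, X)$ of \eqref{eq:qap-sdp}, the chain of inequalities
\[
\langle A \otimes B, \tX \rangle \;\geq\; \langle C, X\rangle \;\geq\; \min_{\sigma}\sum_a c_{a,\sigma(a)} \;\geq\; \sum_{i,j} A_{ij}B_{ij},
\]
for a cost matrix $C = (c_{ab})$ built from the $\{\bu^{(ij)},\bv^{(kl)}\}$. Since $(I, \vec(I)\vec(I)^T)$ is feasible and its objective equals $\sum_{i,j}A_{ij}B_{ij}$, and since Condition 2 is engineered so that the lower bound also equals $\sum_{i,j}A_{ij}B_{ij}$ at $\sigma=\mathrm{id}$, the tuple will be optimal.

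The first inequality is the technical heart. I would begin by deriving ``marginal'' identities that are not explicit constraints of \eqref{eq:qap-sdp}:
\[
\sum_j \tX_{(i,k),(j,l)} = X_{ik} = \sum_l \tX_{(i,k),(j,l)}, \qquad \sum_i \tX_{(i,k),(j,l)} = X_{jl} = \sum_k \tX_{(i,k),(j,l)}.
\]
These follow from the Schur complement $\tX \succeq \vec(X)\vec(X)^T$: for the indicator vector $v_l \in \R^{n^2}$ with $(v_l)_{(j,m)} = \delta_{m,l}$, the block-sum constraint together with Gangster collapse $v_l^\top \tX v_l$ to $\sum_j \tX_{(j,l),(j,l)} = 1$, while $v_l^\top \vec(X) = 1$ by double stochasticity. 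Equality in $v_l^\top \tX v_l \geq (v_l^\top \vec(X))^2$ forces $v_l$ into the kernel of $\tX - \vec(X)\vec(X)^T$, which in coordinates is the first marginal identity. A parallel argument with $(v'_l)_{(j,m)} = \delta_{j,l}$ yields the second, and symmetry of $\tX$ gives the remaining two.

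Next, Gangster vanishes $\tX_{(i,k),(j,l)}$ outside the index set $S = \{(i=j,k=l)\} \cup \{(i\neq j, k\neq l)\}$, on which Condition 1 applies. Together with $\tX \geq 0$,
\[
\langle A \otimes B, \tX \rangle \;\geq\; \sum_{i,j,k,l}\bigl(u^{(ij)}_k + u^{(ij)}_l + v^{(kl)}_i + v^{(kl)}_j\bigr)\,\tX_{(i,k),(j,l)},
\]
where I extend the sum from $S$ to all of $[n]^4$ (the added terms vanish). Applying the four marginal identities reduces each of the four summands to a sum over $X$-indices; collecting the coefficient of $X_{ab}$ yields $\sum_{a,b}c_{ab}X_{ab}$ with
\[
c_{ab} = \sum_j\bigl(u^{(aj)}_b + u^{(ja)}_b\bigr) + \sum_l\bigl(v^{(bl)}_a + v^{(lb)}_a\bigr).
\]
Since $X \in \text{DS}(n)$, Birkhoff--von Neumann gives $\langle C, X\rangle \geq \min_\sigma \sum_a c_{a,\sigma(a)}$, and a direct re-indexing identifies $\sum_a c_{a,\sigma(a)}$ with the LHS of Condition 3, so it is $\geq \sum_{i,j}A_{ij}B_{ij}$.

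The hard part will be extracting the marginal identities; they are not among the listed constraints of \eqref{eq:qap-sdp} and require recognizing that the indicator vectors $v_l, v'_l$ realize equality in the Schur-complement inequality $\tX \succeq \vec(X)\vec(X)^T$, using Gangster together with the block-sum constraints. Once these are available, they are exactly what is needed to route the four $u/v$ contributions onto the four index slots appearing in Condition 3, and the rest of the proof is careful book-keeping.
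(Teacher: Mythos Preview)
Your proposal is correct and takes a genuinely different route from the paper. The paper argues via duality: it passes to the dual of the inner SDP in $\tX$ (Proposition~\ref{prop:qap-inner}), builds from the $\bu^{(ij)},\bv^{(kl)}$ an operator $\tP$, and shows $\tP$ lies in the \emph{closure} $\mathcal S$ of a set of dual-feasible directions (Lemma~\ref{thm:t_cone_description1}, Proposition~\ref{thm:inside_s}); because the dual feasible region is not closed, this step requires constructing an explicit sequence of dual certificates whose objectives approach the primal value. Your argument is purely primal: you extract the marginal identities $\sum_j \tX_{(i,k),(j,l)}=X_{ik}$ (and their three companions) from the Schur-complement kernel, use Condition~1 together with Gangster and $\tX\ge 0$ to push $\langle A\otimes B,\tX\rangle$ down to a linear functional $\langle C,X\rangle$, and then finish with Birkhoff--von Neumann and Condition~3. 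The paper does prove these same marginal identities in Appendix~\ref{append:redundancy} (Proposition~\ref{thm:marginalsums}), but only to show the arrow constraint is redundant --- they are never exploited in the proof of Theorem~\ref{thm:exactness_indeterminate} itself. Your route therefore avoids all limiting arguments; the paper's dual construction, by contrast, is precisely what exposes the non-closedness phenomenon of Section~\ref{sec:geom}. A small bonus: your chain of inequalities uses only Conditions~1 and~3 (the objective at $(I,\vec(I)\vec(I)^T)$ equals $\sum_{i,j}A_{ij}B_{ij}$ unconditionally), whereas the paper's dual proof genuinely needs Condition~2 to enforce $\tZ_{(i,i),(j,j)}=0$ in Proposition~\ref{thm:tight-closure}; so your argument in fact yields a slight strengthening of the theorem.
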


We prove Theorem \ref{thm:exactness_indeterminate} in Section \ref{sec:proof_exactness}.

We make a few remarks concerning Theorem \ref{thm:exactness_indeterminate}.  Our first remark is that, by combining Requirements 1 and 3, one obtains 
\begin{equation*}
\begin{aligned}
%4 \sum_{ij} A_{ij} B_{\sigma(i)\sigma(j)} ~\geq~ & \sum_{ij} A_{ij} (\beta^{(ij)}_{\sigma(i)}+\beta^{(ij)}_{\sigma(j)}) + (\alpha^{(\sigma(i)\sigma(j))}_{i}+\alpha^{(\sigma(i)\sigma(j))}_{j}) B_{\sigma(i)\sigma(j)} \\
\sum_{ij} A_{ij} B_{\sigma(i)\sigma(j)} ~\geq~ & \sum_{ij} u^{(ij)}_{\sigma(i)}+u^{(ij)}_{\sigma(j)} + v^{(\sigma(i)\sigma(j))}_{i}+v^{(\sigma(i)\sigma(j))}_{j} \\
%~=~ & \sum_{ij} A_{ij} (\beta^{(ij)}_{\sigma(i)}+\beta^{(ij)}_{\sigma(j)}) + (\alpha^{(ij)}_{\sigma^{-1}(i)}+\alpha^{(ij)}_{\sigma^{-1}(j)}) B_{ij} ~\geq~ 4 \sum_{ij} A_{ij} B_{ij}
~=~ & \sum_{ij} u^{(ij)}_{\sigma(i)}+u^{(ij)}_{\sigma(j)} + v^{(ij)}_{\sigma^{-1}(i)}+v^{(ij)}_{\sigma^{-1}(j)} ~\geq~ \sum_{ij} A_{ij} B_{ij}
\end{aligned}
\end{equation*}
for all permutations $\sigma$.  This implies that $I$ is the optimal solution to the QAP \eqref{eq:qap}.  In this sense, the collection of vectors $\{\bu^{(ij)}\}_{i,j \in [n]}, \{\bv^{(kl)}\}_{k,l \in [n]} \subset \mathbb{R}^{n}$ serves as a certificate that $I$ is the optimal solution to the QAP; in particular, the semidefinite relaxation \eqref{eq:qap-sdp} seeks certificates of optimality of the form specified in Theorem \ref{thm:exactness_indeterminate} among all possible permutations.

Our second remark is that if one is able to find a set of vectors such that equality is attained in the Requirement 1 of Theorem \ref{thm:exactness_indeterminate}; that is
$$u^{(ij)}_k+u^{(ij)}_l + v^{(kl)}_i+v^{(kl)}_j = A_{ij} B_{kl},$$ 
for all ($i \neq j,k \neq l$) and all ($i=j,k=l$), then Requirement 3 is precisely equal to $I$ being an optimal solution to the QAP.  This is possible for $n=3$, for instance.

\begin{theorem} \label{thm:exactness_n3}
The SDP relaxation \eqref{eq:qap-sdp} is exact when $n=3$.
\end{theorem}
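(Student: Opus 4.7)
The plan is to apply Theorem \ref{thm:exactness_indeterminate} to an explicitly constructed family of certificate vectors $\{\bu^{(ij)}\}, \{\bv^{(kl)}\}$. First I would reduce to the case where $I$ is an optimal permutation for the QAP: for any optimal $X^\star \in \mathrm{Perm}(n)$, replacing $B$ by $(X^\star)^T B X^\star$ preserves the optimal values of both \eqref{eq:qap} and \eqref{eq:qap-sdp} (via the obvious relabeling bijection on feasible pairs $(X,\tX)$) and moves the QAP optimizer to $I$. Under this reduction, the remark immediately following Theorem \ref{thm:exactness_indeterminate} tells us it suffices to exhibit vectors satisfying Requirement 1 \emph{with equality}: in that case Requirement 3 becomes exactly the assertion that $I$ minimizes the QAP objective (our standing assumption), and Requirement 2 is subsumed as the $(k,l) = (i,j)$ instance of Requirement 1.

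My proposed construction is to take $\bv^{(kl)} = 0$ for every $(k,l)$, set $u^{(ii)}_k = \tfrac{1}{2} A_{ii} B_{kk}$ on the diagonal, and for $i \neq j$ define $\bu^{(ij)} \in \R^3$ as the unique solution of the three-equation pair-sum system
\begin{equation*}
u^{(ij)}_1 + u^{(ij)}_2 = A_{ij} B_{12}, \quad u^{(ij)}_1 + u^{(ij)}_3 = A_{ij} B_{13}, \quad u^{(ij)}_2 + u^{(ij)}_3 = A_{ij} B_{23}.
\end{equation*}
With these choices, the equality form of Requirement 1 is immediate by substitution in both the $(i \neq j, k \neq l)$ and $(i=j, k=l)$ cases.

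The only nontrivial point is the unique solvability of the pair-sum system used to define $\bu^{(ij)}$ for off-diagonal $(i,j)$: the linear map $\R^3 \to \R^3$ sending $(u_1,u_2,u_3)$ to $(u_1+u_2, u_1+u_3, u_2+u_3)$ is invertible, so the system has a solution for every right-hand side. This is exactly where $n = 3$ enters; for $n \geq 4$ the analogous pair-sum map is over-determined, and forcing equality in Requirement 1 would impose nontrivial compatibility conditions on $B$ (e.g.\ $B_{12} + B_{34} = B_{13} + B_{24}$) that need not hold. Once the construction is in hand, unpacking Requirement 3 gives $\sum_{ij} A_{ij} B_{\sigma(i)\sigma(j)} \geq \sum_{ij} A_{ij} B_{ij}$ for every permutation $\sigma$ (a routine calculation using that $\sigma(i) \neq \sigma(j)$ whenever $i \neq j$, so that the pair-sum formula applies to each off-diagonal term), which is exactly the QAP-optimality of $I$. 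I do not anticipate any further obstacle.
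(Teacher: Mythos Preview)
Your proposal is correct and follows essentially the same approach as the paper: reduce to $I$ optimal, then invoke Theorem~\ref{thm:exactness_indeterminate} via the second remark by constructing certificate vectors that achieve \emph{equality} in Requirement~1. The only difference is cosmetic---the paper uses a construction symmetric in $A$ and $B$ with both $\bu^{(ij)}$ and $\bv^{(kl)}$ nonzero, whereas you set $\bv^{(kl)}=0$ and solve the pair-sum system for $\bu^{(ij)}$ alone, which is arguably a bit cleaner.
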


\begin{proof} [Proof of Theorem \ref{thm:exactness_n3}]
We assume without loss of generality that $I$ is an optimal solution to \eqref{eq:qap}. We choose
$$
\begin{aligned}
& u^{(12)}_1 = A_{12}(B_{12} + B_{13} - B_{23})/4, ~ v^{(12)}_1 = B_{12}(A_{12} + A_{13} - A_{23})/4 \\
& u^{(23)}_1 = A_{23}(B_{12} + B_{13} - B_{23})/4, ~ v^{(23)}_1 = B_{23}(A_{12} + A_{13} - A_{23})/4 \\
& u^{(31)}_1 = A_{31}(B_{13} + B_{13} - B_{23})/4, ~ v^{(31)}_1 = B_{31}(A_{12} + A_{13} - A_{23})/4. \\
\end{aligned}
$$
% $u^{(12)}_1 = A_{12}(B_{12} + B_{13} - B_{23})$ and $v^{(12)}_1 = B_{12}(A_{12} + A_{13} - A_{23})$ (for all choices of superscripts).  
The other coordinates are defined similarly via a cyclic shift (of the subscripts in the brackets).  In addition, we choose $u^{(ii)}_j = A_{ii}B_{jj}/4$ and $v^{(ii)}_j = A_{jj}B_{ii}/4$ for all $i,j \in [n]$. 
 Then notice that Requirement 1 in Theorem \ref{thm:exactness_indeterminate} holds with equality.  Combining this with the assumption that $I$ is an optimal solution to \eqref{eq:qap} implies exactness (by our second remark). 
\end{proof}

We cannot do this in general.  This is because the vectors only provide $2 \binom{n}{2} \times n$ degrees of freedom whereas there are ${n\choose 2} \times {n\choose 2}$ equality conditions to satisfy.

Our second remark is that, if one chooses the entries of $u^{(ij)}_k$ and $v^{(kl)}_i$ such that $u^{(ij)}_k = u^{(ij)}_{k'}$ and $v^{(kl)}_i = v^{(kl)}_{i'}$; i.e., they do not depend on the subscript, then Requirement 3 of Theorem \ref{thm:exactness_indeterminate} is immediately satisfied.  This is because for all permutations $\sigma$ one has
\begin{equation} \label{eq:requirement3satisfied}
\begin{aligned}
\sum_{i,j}  u^{(ij)}_{\sigma(i)}+u^{(ij)}_{\sigma(j)} + v^{(ij)}_{\sigma^{-1}(i)}+v^{(ij)}_{\sigma^{-1}(j)} = \sum_{i,j}  u^{(ij)}_{i}+u^{(ij)}_{j} + v^{(ij)}_{i}+v^{(ij)}_{j} = \sum_{i,j} A_{ij}B_{ij},
\end{aligned}
\end{equation}
where the last equality follows from Requirement 2.

\section{Exactness Conditions for Specific Families of Matrices $A$ and $B$} \label{sec:specificab}

The intention of stating Theorem \ref{thm:exactness_indeterminate} is that it serves as a {\em master theorem} for which one can substitute reasonable choices of $\{\bu^{(ij)}\}$ and $\{\bv^{(kl)}\}$ (as functions of $A$ and $B$) and derive more specific families of matrices $A$ and $B$ for which the relaxation \eqref{eq:qap-sdp} is exact.  The presence of the values $u^{(ij)}_k$ and $v^{(kl)}_i$ make Theorem \ref{thm:exactness_indeterminate} slightly more difficult to appreciate.  The reason we state this particular version is that the added flexibility allow us to capture many instances where the relaxation is exact.  Nevertheless, in what follows, we will make specific choices of $\{\bu^{(ij)}\}$ and $\{\bv^{(kl)}\}$ that lead to families of matrices $A$ and $B$ for which the relaxation is exact.   

\subsection{Perturbation Model}

Our first application concerns a perturbation model.  Suppose $B = - A$.  It is clear that the identity matrix $I$ is an optimal solution.  Suppose instead $B \approx - A$, then one would expect $I$ to remain optimal.  This is the essence of the next result.  

% \begin{theorem}\label{thm:exact-perturbation}
% Suppose $A = C + \Delta$ and $B = -C + \Delta$ for symmetric matrices $C$ and $\Delta$ with zero diagonal.  Suppose one has
% \begin{equation} \label{eq:perturbation_assumption}
% 2 (\Delta^2_{ij} + \Delta^2_{kl}) \leq \left( C_{ij} - C_{kl} + \Delta_{ij} + \Delta_{kl} \right)^2 = \left( A_{ij} - (-B_{kl}) \right)^2,
% \end{equation}
% for all $i,j,k,l \in [n]$ such that $i \not= j$, $k \not= l$, and $\{i,j\} \neq \{k,l\}$.  Then the tuple $(X,\tX) = (I, \vec(I) \vec(I)^T)$ is an optimal solution to \eqref{eq:qap-sdp}.  Here, we use $\{i,j\} \neq \{k,l\}$ to mean that both sets differ by at least one element.
% \end{theorem}

\begin{theorem}\label{thm:exact-perturbation}
Suppose $A = C + \Delta$ and $B = -C + \Delta$ for symmetric matrices $C$ and $\Delta$.  Suppose one has
\begin{equation} \label{eq:perturbation_assumption}
2 (\Delta^2_{ij} + \Delta^2_{kl}) \leq \left( C_{ij} - C_{kl} + \Delta_{ij} + \Delta_{kl} \right)^2 = \left( A_{ij} - (-B_{kl}) \right)^2,
\end{equation}
for all $i,j,k,l \in [n]$ such that $i \not= j$, $k \not= l$ or $i=j$, $k=l$.  Then the tuple $(X,\tX) = (I, \vec(I) \vec(I)^T)$ is an optimal solution to \eqref{eq:qap-sdp}. 
\end{theorem}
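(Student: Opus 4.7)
The plan is to deduce Theorem \ref{thm:exact-perturbation} as a direct application of Theorem \ref{thm:exactness_indeterminate} by exhibiting an explicit certificate $\{\bu^{(ij)}\},\{\bv^{(kl)}\}$ tailored to the decomposition $A = C + \Delta$, $B = -C + \Delta$. The guiding observation is the second remark after Theorem \ref{thm:exactness_indeterminate}: if I choose each $u^{(ij)}_k$ to be constant in $k$ and each $v^{(kl)}_i$ to be constant in $i$, then Requirement 3 holds automatically, leaving only Requirements 1 and 2 to be verified.

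Concretely, I would propose the separable choice
\[
u^{(ij)}_k \;:=\; \tfrac{1}{4}\bigl(A_{ij}B_{ij} - 2 C_{ij}\Delta_{ij}\bigr), \qquad v^{(kl)}_i \;:=\; \tfrac{1}{4}\bigl(A_{kl}B_{kl} + 2 C_{kl}\Delta_{kl}\bigr),
\]
so that the cross term $C_{ij}\Delta_{ij}$ enters the two pieces with opposite signs. Requirement 2 then follows from a one-line check:
\[
u^{(ij)}_i + u^{(ij)}_j + v^{(ij)}_i + v^{(ij)}_j = \tfrac{1}{2}\bigl(A_{ij}B_{ij} - 2 C_{ij}\Delta_{ij}\bigr) + \tfrac{1}{2}\bigl(A_{ij}B_{ij} + 2 C_{ij}\Delta_{ij}\bigr) = A_{ij}B_{ij}.
\]

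For Requirement 1, the key algebraic identity is $A_{ij}B_{ij} = (C_{ij}+\Delta_{ij})(-C_{ij}+\Delta_{ij}) = -C_{ij}^2 + \Delta_{ij}^2$, from which
\[
A_{ij}B_{ij} - 2 C_{ij}\Delta_{ij} \;=\; -(C_{ij}+\Delta_{ij})^2 + 2\Delta_{ij}^2 \;=\; -A_{ij}^2 + 2\Delta_{ij}^2,
\]
and symmetrically $A_{kl}B_{kl} + 2 C_{kl}\Delta_{kl} = -(C_{kl}-\Delta_{kl})^2 + 2\Delta_{kl}^2 = -B_{kl}^2 + 2\Delta_{kl}^2$. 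Substituting these into $u^{(ij)}_k + u^{(ij)}_l + v^{(kl)}_i + v^{(kl)}_j \leq A_{ij}B_{kl}$ and clearing the factor $\tfrac{1}{2}$, Requirement 1 reduces to
\[
2\bigl(\Delta_{ij}^2 + \Delta_{kl}^2\bigr) \;\leq\; A_{ij}^2 + 2A_{ij}B_{kl} + B_{kl}^2 \;=\; \bigl(A_{ij}+B_{kl}\bigr)^2,
\]
which is exactly the hypothesis \eqref{eq:perturbation_assumption}.

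The only genuinely creative step is guessing the certificate; once the separable form is in hand, no real obstacle remains, since the hypothesis is engineered to line up precisely with Requirement 1 under this choice. Conceptually, the certificate is the natural linearization around the unperturbed instance $B = -A$ (where all three requirements become trivial), and the splitting $A_{ij}B_{ij} = -C_{ij}^2 + \Delta_{ij}^2$ isolates the curvature contribution of the perturbation in a way that matches the quadratic form $(A_{ij}+B_{kl})^2$ appearing on the right-hand side of the hypothesis.
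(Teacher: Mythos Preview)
Your proposal is correct and takes essentially the same approach as the paper: the certificate you write down, once you expand $A_{ij}B_{ij}\mp 2C_{ij}\Delta_{ij}$ as $-A_{ij}^2+2\Delta_{ij}^2$ and $-B_{kl}^2+2\Delta_{kl}^2$, is literally identical to the paper's choice $u^{(ij)}_k=-A_{ij}^2/4+\Delta_{ij}^2/2$, $v^{(kl)}_i=-B_{kl}^2/4+\Delta_{kl}^2/2$, and the verification of Requirements 1--3 proceeds in the same way.
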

\begin{proof}[Proof of Theorem \ref{thm:exact-perturbation}]
We make the following choices of $\bu^{(ij)}$ and $\bv^{(kl)}$ for all $i,j,k,l \in [n]$:
% $$
% \begin{aligned}
% u^{(ij)}_k & = - (C_{ij}+\Delta_{ij})^2/4 + \Delta_{ij}^2/2 = (\Delta_{ij}^2 - 2C_{ij}\Delta_{ij} - C_{ij}^2)/4, \\
% v^{(kl)}_{i} & = - (C_{kl} - \Delta_{kl})^2/4 + \Delta_{kl}^2/2 = (\Delta_{kl}^2 + 2C_{kl}\Delta_{kl} - C_{kl}^2)/4 .
% \end{aligned}
% $$
$$
u^{(ij)}_k = - (C_{ij}+\Delta_{ij})^2/4 + \Delta_{ij}^2/2, ~ v^{(kl)}_{i} = - (- C_{kl} + \Delta_{kl})^2/4 + \Delta_{kl}^2/2.
$$
We check that these choices satisfy the requirements stated in Theorem \ref{thm:exactness_indeterminate}.  First, one has
% $$
% \begin{aligned}
% & u^{(ij)}_k+u^{(ij)}_l + v^{(kl)}_i+v^{(kl)}_j - (C_{ij}+\Delta_{ij})(-C_{kl}+\Delta_{kl}) \\
% = ~ & (\Delta_{ij}^2 - 2C_{ij}\Delta_{ij} - C_{ij}^2)/2 + (\Delta_{kl}^2 + 2C_{kl}\Delta_{kl} - C_{kl}^2)/2 - (C_{ij}+\Delta_{ij})(C_{kl}-\Delta_{kl}) \\
% = ~ & (\Delta_{ij}^2+\Delta_{kl}^2) - \left( C_{ij}-C_{kl}+\Delta_{ij}+\Delta_{kl} \right)^2/2.
% \end{aligned}
% $$
$$
\begin{aligned}
& u^{(ij)}_k+u^{(ij)}_l + v^{(kl)}_i+v^{(kl)}_j - A_{ij}B_{kl} \\
= ~ &  - (C_{ij}+\Delta_{ij})^2/2 + \Delta_{ij}^2 - (- C_{kl} + \Delta_{kl})^2/2 + \Delta_{kl}^2 - (C_{ij}+\Delta_{ij})(-C_{kl}+\Delta_{kl}) \\
= ~ & (\Delta_{ij}^2+\Delta_{kl}^2) - \left( C_{ij}-C_{kl}+\Delta_{ij}+\Delta_{kl} \right)^2/2.
\end{aligned}
$$
Then the assumption \eqref{eq:perturbation_assumption} implies that the above expression is non-negative for $i\not=j$, $k \not= l$ and $i=j$, $k=l$, which is Requirement 1.  Suppose that $i=k$ and $j=l$.  Then the above expression evaluates to zero, so we have Requirement 2.  Last, since the values of $u^{(ij)}_k$ and $v^{(kl)}_i$ do not depend on the subscripts, by the above remark (see \eqref{eq:requirement3satisfied}), Requirement 3 is satisfied.
\end{proof}

% \begin{proof}[Proof of Theorem \ref{thm:exact-perturbation}]
% We make the following choices of $\bu^{(ij)}$ and $\bv^{(kl)}$:
% $$
% \begin{aligned}
% u^{(ij)}_k & = - (C_{ij}+\Delta_{ij})^2 + 2 \Delta_{ij}^2 = \Delta_{ij}^2 - 2C_{ij}\Delta_{ij} - C_{ij}^2, \\
% v^{(kl)}_{i} & = - (C_{kl} - \Delta_{kl})^2 + 2 \Delta_{kl}^2 = \Delta_{kl}^2 + 2C_{kl}\Delta_{kl} - C_{kl}^2 .
% \end{aligned}
% $$
% We check that this choice satisfy the requirements stated in Theorem \ref{thm:exactness_indeterminate}.  First, one has
% $$
% \begin{aligned}
% & u^{(ij)}_k+u^{(ij)}_l + v^{(kl)}_i+v^{(kl)}_j - 4 (C_{ij}+\Delta_{ij})(-C_{kl}+\Delta_{kl}) \\
% = ~ & 2(\Delta_{ij}^2 - 2C_{ij}\Delta_{ij} - C_{ij}^2) + 2(\Delta_{kl}^2 + 2C_{kl}\Delta_{kl} - C_{kl}^2) - 4(C_{ij}+\Delta_{ij})(C_{kl}-\Delta_{kl}) \\
% = ~ & 4(\Delta_{ij}^2+\Delta_{kl}^2) - 2\left( C_{ij}-C_{kl}+\Delta_{ij}+\Delta_{kl} \right)^2.
% \end{aligned}
% $$

% Suppose that the sets $\{i,j\}$ and $\{k,l\}$ differ by at least one element; that is, it is not the case that $i=k,j=l$ or $j=k,i=l$.  Then the assumption \eqref{eq:perturbation_assumption} implies that the above expression is non-negative, which is Requirement 1.  Suppose that $i=k$ and $j=l$.  Then the above expression evaluates to zero.  The situation is similar if $j=k$ and $i=l$.  Combining both statements, we have Requirement 2.  Last, since the choice of $u$ and $v$ do not depend on the subscripts, by the above remark (see \eqref{eq:requirement3satisfied}), Requirement 3 is satisfied.
% \end{proof}

\subsection{Consequences for Graph Matching}

The second result concerns the graph matching problem. Let $\mathcal{G}_A$ and $\mathcal{G}_B$ be two undirected graphs over $n$ vertices.  Let $A$ and $-B$ be the adjacency matrices of the graphs $\mathcal{G}_A$ and $\mathcal{G}_B$ respectively.  Consider the graph matching problem instance formulated as a QAP in \eqref{eq:qap_graphmatching}. %The graph matching problem concerns finding a mapping between these sets of vertices so as to maximize the overlap between edges.  Suppose we maximize for the {\em number} of overlapping edges.  Then the objective can be expressed as

\begin{theorem}\label{thm:subgraph-isomorphism}
Let $A$ and $-B$ be the adjacency matrix of two graphs $\mathcal{G}_A$ and $\mathcal{G}_B$.  Suppose that $\mathcal{G}_A$ is a subgraph of $\mathcal{G}_B$.  Then the tuple $(X,\tX) = (I, \vec(I) \vec(I)^T)$ is an optimal solution to \eqref{eq:qap-sdp}.
%Let $\mathcal{A}$ and $\mathcal{B}$ be two graphs with adjacency matrices $A$ and $-B$, respectively. Suppose $\mathcal{A}$ is a subgraph of $\mathcal{B}$ (or $\mathcal{B}$ is a subgraph of $\mathcal{A}$). Then \eqref{eq:qap-sdp_temporarytag} is exact. 
\end{theorem}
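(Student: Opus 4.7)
The plan is to apply Theorem \ref{thm:exactness_indeterminate} with an explicit and very simple certificate. The starting structural observations are that $A_{ij} \in \{0,1\}$, $B_{ij} \in \{-1,0\}$, and the subgraph hypothesis forces $A_{ij} = 1 \Rightarrow B_{ij} = -1$; equivalently, $A_{ij} B_{ij} = -A_{ij}$ for every $i,j$, and $B_{kl} \geq -1$ for every $k,l$. Since adjacency matrices of simple graphs have zero diagonal, one may ignore the $i=j$, $k=l$ case in what follows.

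Motivated by these observations, I would try the ansatz
\[
u^{(ij)}_k \;:=\; -A_{ij}/2, \qquad v^{(kl)}_i \;:=\; 0,
\]
which is constant in the free subscript (the $k$ for $u$ and the $i$ for $v$). Because of this constancy, Requirement 3 of Theorem \ref{thm:exactness_indeterminate} is automatic via the remark surrounding \eqref{eq:requirement3satisfied}, so only Requirements 1 and 2 need to be checked. Requirement 2 reduces to $-A_{ij} = A_{ij} B_{ij}$, which is exactly the reformulation of the subgraph hypothesis noted above. Requirement 1 reduces to $-A_{ij} \leq A_{ij} B_{kl}$, which is trivial if $A_{ij}=0$ and otherwise becomes $-1 \leq B_{kl}$, always true since $-B$ is an adjacency matrix.

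The main conceptual point, rather than an obstacle, is the choice of ansatz. The more elaborate perturbation result Theorem \ref{thm:exact-perturbation} does \emph{not} directly deliver this corollary: taking $C = (A-B)/2$ and $\Delta = (A+B)/2$, one finds that at any pair $(i,j,k,l)$ with $A_{ij}=1$, $A_{kl}=0$, $B_{kl}=-1$, the right-hand side of \eqref{eq:perturbation_assumption} vanishes while the left-hand side equals $1/2$. One therefore really has to return to the master theorem and exploit the $\{0,1\}$-structure directly by taking $v \equiv 0$ and $u^{(ij)}_k$ constant in $k$, which is precisely the choice that kills the cross-terms obstructing the perturbation bound.
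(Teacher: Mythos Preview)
Your proof is correct and essentially identical to the paper's: the same certificate $u^{(ij)}_k = -A_{ij}/2$, $v^{(kl)}_i = 0$ is used, with the same case-check for Requirements 1 and 2 and the same appeal to \eqref{eq:requirement3satisfied} for Requirement 3. Your additional remark that Theorem \ref{thm:exact-perturbation} does not directly yield this result is a nice observation not present in the paper and is also correct.
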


\begin{proof}[Proof of Theorem \ref{thm:subgraph-isomorphism}]
We choose $\bu^{(ij)} = - A_{ij} \bone/2$ for all $i, j$, and $\bv^{(kl)} = \bzero$ for all $k, l$.  We check that this choice satisfy the requirements stated in Theorem \ref{thm:exactness_indeterminate}. Note that
\begin{equation} \label{eq:subgraph_substitution}
u^{(ij)}_k+u^{(ij)}_l + v^{(kl)}_i+v^{(kl)}_j = u^{(ij)}_k+u^{(ij)}_l = - A_{ij} .
\end{equation}

First, let $i \neq j$ and $k \neq l$.  Note that $B_{kl} \ge -1$.  By checking cases, one has $ - A_{ij} \le A_{ij}B_{kl}$.  Suppose instead that $i=j$ and $k=l$.  Then one has $-A_{ii} = 0 = A_{ii}B_{kk}$.  By combining all of these cases with \eqref{eq:subgraph_substitution}, we see that Requirement 1 is satisfied.

Second, for $i, j \in [n]$, consider the following cases.  Suppose $A_{ij} = 0$.  Then one has $-A_{ij} = A_{ij}B_{ij}$.  Suppose $A_{ij} = 1$.  Since $\mathcal{G}_A$ is a subgraph of $\mathcal{G}_B$, it must be that $B_{ij} = -1$, and hence $-A_{ij} = A_{ij}B_{ij}$.  Consequently, from \eqref{eq:subgraph_substitution}, one has $u^{(ij)}_i+u^{(ij)}_j + v^{(ij)}_i+v^{(ij)}_j = u^{(ij)}_i+u^{(ij)}_j = - A_{ij} = A_{ij}B_{ij}$, and hence Requirement 2 is satisfied. 

Last, the values of $u^{(ij)}_k$ and $v^{(kl)}_i$ do not depend on the subscripts, and hence by \eqref{eq:requirement3satisfied}, Requirement 3 is satisfied.
\end{proof}

\if0
\subsection{$A = \bone \mathbf{w}^T + \mathbf{w} \bone^T$}

There is prior work that describe specific families of QAPs that are solvable in polynomial time, usually with some specialized algorithm -- an excellent survey of this topic can be found in \cite{qapbookcela}.  We briefly discuss some of the implications of Theorem \ref{thm:exactness_indeterminate} in relation to some of these results.

The first example concerns the case where $A$ or $B$ is of the form $\bone \mathbf{w}^T + \mathbf{w} \bone^T$.

\begin{theorem}[Theorem 4.18 in \cite{qapbookcela}]\label{thm:sum}
Suppose either $A$ or $B$ is equal to $\bone \mathbf{w}^T + \mathbf{w} \bone^T$ for some $\mathbf{w} \in \bbR^{n}$. Then \eqref{eq:qap} is solvable in polynomial time. 
\end{theorem}

\textcolor{blue}{
\begin{theorem} \label{thm:1ww1}
Suppose either $A$ or $B$ is of the form $\bone \bw^T + \bw \bone^T$ for some $\bw \in \bbR^{n}$.
Then the SDP relaxation \eqref{eq:qap-sdp} is exact. 
\end{theorem}
\begin{proof} [Proof of Theorem \ref{thm:1ww1}]
Assume $A = \bone \mathbf{w}^T + \mathbf{w} \bone^T$ and assume without loss of generality that $I$ is an optimal solution to \eqref{eq:qap}.  We choose $\bu^{(ij)} = \bzero$ for all $i,j \in [n]$ and $\bv^{(kl)} = 4 B_{kl} \mathbf{w}$ for all $k,l \in [n]$.  We check that this satisfies the requirements in Theorem \ref{thm:exactness_indeterminate}.  Note that for all $i,j,k,l \in [n]$, one has 
$$
u^{(ij)}_k+u^{(ij)}_l + v^{(kl)}_i+v^{(kl)}_j = v^{(kl)}_i+v^{(kl)}_j = 4(w_i + w_j)B_{kl} = 4 A_{ij}B_{kl}.$$
As such, Requirement 1 and 2 are satisfied. Moreover, based on \eqref{eq:condition1-equality} and the assumption that $I$ is an optimal solution to \eqref{eq:qap} that Requirement 3 is also satisfied. 
\end{proof}
}
\fi

\subsection{A Type of Comonotonicity in Matrices}

The third result concerns a type of comonotonicity in matrices.  Let $A$ and $B$ be symmetric matrices.  Suppose $A$ and $B$ satisfy the following inequality
\begin{equation} \label{eq:krushevski}
(A_{ij}-A_{kl})(B_{ij}-B_{kl}) \leq 0 \qquad \text{ for all } \qquad i,j,k,l \in [n].
\end{equation}
A consequence of the rearrangement inequality is that the identity matrix $I$ is an optimal solution to the QAP -- this observation was made early on by Krushevski (Theorem 4.14 in \cite{qapbookcela}).  

Our next result shows that the semidefinite relaxation \eqref{eq:qap-sdp} is exact under a similarly looking but stronger condition:
%Several sufficient conditions for QAP to be solvable in polynomial time, or for the solution to QAP to be determined, have been studied [CITE, QAP book, Aflalo 2015]. In this section, we discuss some applications of Theorem \ref{thm:exactness_indeterminate} and compare the results with those have been examined in the literature. 

\begin{theorem}\label{thm:amiability}
Suppose $A$ and $B$ are matrices satisfying
\begin{equation} \label{eq:amiability}
A_{ij}B_{ij} + A_{kl}B_{kl} \leq 2 \min\{A_{ij}B_{kl},A_{kl}B_{ij}\},
\end{equation}
for all $i,j,k,l \in [n]$.  Then $(I, \vec(I) \vec(I)^T)$ is an optimal solution to \eqref{eq:qap-sdp}. 
\end{theorem}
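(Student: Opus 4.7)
The plan is to deduce Theorem \ref{thm:amiability} as a direct specialization of the master result Theorem \ref{thm:exactness_indeterminate}. The key observation is the remark following that theorem: if the multipliers $u^{(ij)}_k$ depend only on $(i,j)$ and $v^{(kl)}_i$ depends only on $(k,l)$, then Requirement 3 is automatically satisfied by \eqref{eq:requirement3satisfied}. This reduces the problem to producing two families of scalars $\alpha_{ij}$ and $\beta_{kl}$ such that
\begin{equation*}
2\alpha_{ij} + 2\beta_{kl} \leq A_{ij} B_{kl} \quad \text{for the prescribed index patterns}, \qquad 2\alpha_{ij} + 2\beta_{ij} = A_{ij} B_{ij} \quad \text{for all } i,j.
\end{equation*}

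Next, I would try the most symmetric split consistent with the equality in Requirement 2, namely
\begin{equation*}
u^{(ij)}_k := \tfrac{1}{4} A_{ij} B_{ij}, \qquad v^{(kl)}_i := \tfrac{1}{4} A_{kl} B_{kl},
\end{equation*}
for all values of the free indices $k$ and $i$. Requirement 2 then reads $\tfrac{1}{2} A_{ij} B_{ij} + \tfrac{1}{2} A_{ij} B_{ij} = A_{ij} B_{ij}$, which is trivially true. Requirement 1, after substitution, becomes
\begin{equation*}
\tfrac{1}{2}\bigl( A_{ij} B_{ij} + A_{kl} B_{kl} \bigr) \leq A_{ij} B_{kl} \quad \text{whenever } (i\neq j, k\neq l) \text{ or } (i=j, k=l).
\end{equation*}
This is precisely one half of the hypothesis \eqref{eq:amiability} (the $A_{ij}B_{kl}$ branch of the min). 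The other branch, $A_{ij}B_{ij}+A_{kl}B_{kl} \leq 2 A_{kl} B_{ij}$, is just the same inequality after exchanging the roles of $(i,j)$ and $(k,l)$, so the symmetric min form of the assumption is exactly what is needed for Requirement 1 to hold across the entire range of index patterns (and in particular for both orderings that the proof of Theorem \ref{thm:exactness_indeterminate} requires).

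There is really no substantial obstacle here; the only thing that required guessing was the symmetric split into quarters, which is forced once one notices that the equality in Requirement 2 must hold for every $(i,j)$ while the freedom of the indices $k$ and $i$ should be used to make Requirement 3 trivial. I would conclude by invoking Theorem \ref{thm:exactness_indeterminate} with this explicit choice of multipliers.
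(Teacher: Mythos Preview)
Your proposal is correct and coincides with the paper's proof: the paper makes exactly the same choice $u^{(ij)}_k = A_{ij}B_{ij}/4$, $v^{(kl)}_i = A_{kl}B_{kl}/4$ and verifies Requirements 1--3 of Theorem~\ref{thm:exactness_indeterminate} in the same way. Your side remark about the second branch of the $\min$ is slightly more than is needed (only the inequality $\tfrac12(A_{ij}B_{ij}+A_{kl}B_{kl})\le A_{ij}B_{kl}$ is used for Requirement~1, and the other branch is the same statement after swapping $(i,j)\leftrightarrow(k,l)$), but this does not affect correctness.
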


Note that the condition \eqref{eq:krushevski} is equivalent to requiring that $ A_{ij} B_{ij} + A_{kl} B_{kl} \leq A_{ij} B_{kl} + A_{kl} B_{ij}$.  Therefore \eqref{eq:amiability} simply requires that the LHS of \eqref{eq:krushevski} is less than (twice) the minimum of the two terms on the RHS.  We are not aware if QAP instances whereby the matrices $A$ and $B$ satisfy the weaker condition \eqref{eq:krushevski} can be solved tractably.  

\begin{proof}[Proof of Theorem \ref{thm:amiability}]
Set $u^{(ij)}_k = A_{ij}B_{ij}/4$ and $v^{(kl)}_{i} = A_{kl}B_{kl}/4$ for all $i,j,k \in [n]$.  We show that $\bu^{(ij)}$ and $\bv^{(kl)}$ constructed in this way satisfy the requirements stated in Theorem \ref{thm:exactness_indeterminate}.

One has $ u^{(ij)}_k+u^{(ij)}_l + v^{(kl)}_i+v^{(kl)}_j = (A_{ij}B_{ij} + A_{kl}B_{kl})/2 \le \min\{A_{ij}B_{kl},A_{kl}B_{ij}\} \leq A_{ij}B_{kl}$.  This proves Requirement 1 and 2.  Requirement 3 is satisfied because the values of $u^{(ij)}_k$ and $v^{(kl)}_i$ do not depend on the subscripts (see \eqref{eq:requirement3satisfied}). 
\end{proof}

\section{Proof of Main Results} \label{sec:proof_exactness}

\subsection{Duality}

The process of establishing conditions that guarantee exactness of any convex relaxation typically starts with the dual.  The difficulty with analyzing \eqref{eq:qap-sdp} directly is that it incorporates many different types of constraints, which typically results in a dual formulation that can be very intimidating to work with.  We take a slightly more forgiving approach.  We instead consider the sub-problem of \eqref{eq:qap-sdp} in which $X \in \mathrm{DS}(n)$ is {\em fixed}, and we only optimize over $\tX$.  To improve readability, we denote $\bx := \vec(X)$.  Also, we let $I:=I_n$ denote the $n\times n$ identity matrix, we let $E:=E_n \in \R^{n\times n}$ be the matrix whose entries are all ones, and we let $\bone := \bone_n \in \R^n$ be the vector whose entries are all ones.  (In settings where the dimensions are not $n\times n$ or $n$ respectively, we will specify the dimensions.) We let $E_{ij}$ represent the standard basis matrix whose $(i,j)$-th entry is one and whose remaining entries are zero. 

We rewrite the affine constraints in the second row of \eqref{eq:qap-sdp} as follows:
$$
\begin{aligned}
\langle E\otimes E_{kl},\tX \rangle = 1 \text{ and } \langle E_{kl} \otimes E, \tX \rangle = 1 \text{ for all $k,l \in [n]$}.
\end{aligned}
$$
Consider the following equivalent formulation of the sub-problem of \eqref{eq:qap-sdp}: 
\begin{equation} \label{eq:qap-sdp_Xsub}
\begin{aligned}
\min_{\tX } \qquad & \langle A \otimes B, \tX \rangle \\
\mathrm{s.t.} \qquad & \tX \succeq \bx \bx^T \\
%& \sum_{i} \tX_{(i,j),(k,l)} = X_{k,l} \text{ and } \sum_{j} \tX_{(i,j),(k,l)} = X_{k,l} \text{ for all } k \neq l \\
& \langle E\otimes E_{kl},\tX \rangle = 1 \text{ and } \langle E_{kl} \otimes E, \tX \rangle = 1 \text{ for all $k,l \in [n]$}\\
& \tX_{(i,k),(j,k)} = 0, \tX_{(k,i),(k,j)} = 0  \text{ for all } k \text{ and } i \neq j \\
& \tX \ge 0
\end{aligned}.
\end{equation}
Notice that \eqref{eq:qap-sdp_Xsub} still specifies a SDP.  We state the dual program in the following.

\begin{proposition}\label{prop:qap-inner}
The dual program to \eqref{eq:qap-sdp_Xsub} is given by
\begin{equation}\label{eq1:prop:qap-inner} 
\begin{aligned}
\underset{\tZ,G,H}{\max} \qquad & \bx^T \big(A \otimes B + G \otimes E + E \otimes H - \tZ \big) \bx - \langle G+H,E \rangle \\
\mathrm{s.t.} \qquad & A \otimes B + G \otimes E + E \otimes H \succeq \tZ \\
& \tZ_{(i,k),(j,l)} \ge 0 \text{ whenever } (i \neq j, k \neq l) \text{ or } (i=j,k=l)
\end{aligned}.
\end{equation}
Moreover, strong duality holds for all inputs $\bx$. % = \vec(X)$ for some $X\in \text{Perm}(n)$.
\end{proposition}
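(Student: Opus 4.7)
The plan is to derive the dual by standard Lagrangian duality applied to \eqref{eq:qap-sdp_Xsub}, and then to establish strong duality via a Slater-type argument. I would introduce distinct multipliers for each primal constraint: a PSD matrix $W \succeq 0$ for $\tX \succeq \bx\bx^T$; scalars $G_{kl}$ and $H_{kl}$, assembled into $n \times n$ matrices $G$ and $H$, for the two families of affine equality constraints $\langle E_{kl} \otimes E, \tX\rangle = 1$ and $\langle E \otimes E_{kl}, \tX\rangle = 1$; an entry-wise non-negative $N \geq 0$ for $\tX \geq 0$; and a sign-free multiplier $M$, supported only on the gangster index pattern, for the equalities $\tX_{(i,k),(j,k)} = 0$ and $\tX_{(k,i),(k,j)} = 0$ ($i \neq j$).

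Forming the Lagrangian and using identities such as $\sum_{k,l} H_{kl} \langle E \otimes E_{kl}, \tX\rangle = \langle E \otimes H, \tX\rangle$, $\sum_{k,l} G_{kl} \langle E_{kl} \otimes E, \tX\rangle = \langle G \otimes E, \tX\rangle$, and $\sum_{k,l} G_{kl} = \langle G, E\rangle$ to rewrite it compactly, I would require the coefficient of $\tX$ in the inner minimization to vanish, which gives $W = A \otimes B + G \otimes E + E \otimes H + M - N$. Setting $\tZ := N - M$ rewrites this as $W = A \otimes B + G \otimes E + E \otimes H - \tZ$, so $W \succeq 0$ becomes the first dual constraint. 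The sign pattern on $\tZ$ follows automatically: on non-gangster entries $M = 0$ forces $\tZ = N \geq 0$, while on gangster entries the freedom in $M$ leaves $\tZ$ unrestricted, matching the condition stated in \eqref{eq1:prop:qap-inner}. Substituting $W$ back into the Lagrangian and collecting the leftover constants yields the dual objective $\bx^T W \bx - \langle G + H, E\rangle$, as claimed.

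Strong duality is the step I expect to be the main obstacle. The primal is bounded and attained, since $\tX \geq 0$ combined with $\sum_{i,j} \tX_{(i,k),(j,l)} = 1$ confines every entry of $\tX$ to $[0, 1]$. However, primal Slater actually fails whenever $\bx = \vec(X)$ with $X \in \mathrm{DS}(n)$: combining the affine equality with the identity $\sum_{i,j}(\bx\bx^T)_{(i,k),(j,l)} = 1$ (which holds for doubly stochastic $X$) places each vector $\bone \otimes e_k$ in the null space of $\tX - \bx\bx^T$, precluding strict positive definiteness. To circumvent this, I would try to establish zero duality gap through dual Slater instead, constructing a triple $(G, H, \tZ)$ with $\tZ_{(i,k),(j,l)} > 0$ strictly on all non-gangster positions and with $G, H$ together with the unrestricted values of $\tZ$ on gangster positions tuned so that $A \otimes B + G \otimes E + E \otimes H - \tZ \succ 0$. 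Given the paper's later remark that the dual feasible set is not closed, I expect this construction to be delicate, and a more careful conic-duality argument---possibly producing a sequence of dual feasible points whose objective converges to the primal optimum rather than attaining it---may be required in place of vanilla Slater.
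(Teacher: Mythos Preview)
Your proposal is correct and follows essentially the same approach as the paper: standard Lagrangian derivation of the dual, followed by dual (rather than primal) Slater to obtain strong duality. Your closing worry is misplaced, however --- the dual Slater point is not delicate at all (the paper simply takes $G=0$, $H=(t+2)E$, $\tZ=(t+2)E_{n^2}-(t+1)I_{n^2}$ with $t=\max\{0,-\lambda_{\min}(A\otimes B)\}$, which makes $\tZ \geq E_{n^2} > 0$ and $A\otimes B + E\otimes H - \tZ = A\otimes B + (t+1)I_{n^2} \succ 0$), and the non-closedness phenomenon you allude to from later in the paper concerns the set $\mathcal{S}$ appearing in the exactness analysis, not the dual feasible region of \eqref{eq:qap-sdp_Xsub}.
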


Here, $G,H \in \mathbb{R}^{n \times n}$, while $\tZ \in \mathbb{R}^{n^2 \times n^2}$.  As a small note: In the primal \eqref{eq:qap-sdp_Xsub} we have an equality condition that applies to all indices $(i,k),(j,l)$ whenever $i=j$ and $k \neq l$, or when $i\neq j$ and $k = l$.  The dual imposes an inequality on the complement of these indices, and one can check that the complement is indeed those that satisfy $i \neq j$ and $k \neq l$, or $i=j$ and $k=l$.

%[LAST STATEMENT: The previous version had $\bx=\vec(X)$ for some $X \in \text{Perm}(n)$.  Is this necessary? ]

To simplify the notation, we denote the dual objective function by $f$
$$
f(\bx,G,H,\tZ) := \bx^T \big(A \otimes B + G \otimes E + E \otimes H - \tZ \big) \bx - \langle G+H,E \rangle.
$$

\begin{proof}[Proof of Proposition \ref{prop:qap-inner}] The proof comprises two parts.

[Derive dual formulation]:  The first step is to derive the dual.  Define the dual variables as follows
    \begin{equation*}
        \begin{aligned}
            & \tY \succeq 0 && : \tX \succeq \bx \bx^T \\
            & H \in \R^{n \times n}
            && : \langle E\otimes E_{kl},\tX \rangle = 1 \text{ for all $k,l \in [n]$}\\
            & G \in \R^{n \times n}
            && : \langle E_{kl} \otimes E, \tX \rangle = 1 \text{ for all $k,l \in [n]$}\\
            & \tZ_{(i,k),(j,l)} \ge 0 && : \tX_{(i,k),(j,l)} \ge 0 \text{ if } (i \neq j, k \neq l) \text{ or } (i=j,k=l) \\
            & \tZ_{(i,k),(j,l)} \in \mathbb{R} && : \tX_{(i,k),(j,l)} = 0 \text{ if } (i=j, k \neq l) \text{ or } (i \neq j,k=l) \\
        \end{aligned} .
    \end{equation*}
    The dual function is given by
    \begin{equation*}
        \begin{aligned}
            \theta(\tY,G,H,\tZ) & := \min_{\tX} \quad \tr (A \otimes B \cdot \tX) - \tr(\tY(\tX - \bx \bx^T)) - \tr(\tZ\tX) \\
            & \quad \quad + \sum_{i,j} H_{ij} \big( \langle E\otimes E_{ij},\tX \rangle - 1 \big) + \sum_{i,j} G_{ij} \big(\langle E_{ij} \otimes E, \tX \rangle - 1 \big) \\
            & = \min_{\tX} \quad \tr \big( (A \otimes B + G \otimes E + E \otimes H - \tZ - \tY)\tX \big) + \bx^T \tY \bx - \langle G+H,E \rangle. 
        \end{aligned} .
    \end{equation*}
This optimization instance is unconstrained over $\tX$.  As such it is unbounded below if
$$A \otimes B + G \otimes E + E \otimes H - \tZ - \tY \neq 0.$$ 
Subsequently, the dual function simplifies to
        \begin{equation*}
            \theta(\tY,G,H,\tZ) = \begin{cases}
                \bx^T \tY \bx - \langle G+H,E \rangle & \text{if $\tY = A \otimes B + G \otimes E + E \otimes H  - \tZ$} \\
                -\infty &  \text{otherwise}
            \end{cases}.
        \end{equation*}
    By incorporating the constraints on $\tY$ and $\tZ$, we conclude that the dual program of \eqref{eq:qap-sdp_Xsub} is \eqref{eq1:prop:qap-inner}.
    
[Strong duality]:  The next step is to establish strong duality.  We do so by applying Slater's condition.  
%Specifically, we show that the optimal solution is bounded and that the relative interior of the feasible region is non-empty; i.e., there exists feasible solutions that satisfy the PSD constraint and the inequality strictly.  First, from \eqref{eq:piab_vanishing}, we have
Specifically, we show that the relative interior of the feasible region is non-empty; i.e., there exists feasible solutions that satisfy the PSD constraint and the inequality strictly.  

Define $t = \max \{0, -\lambda_{\rm min}(A \otimes B)\}$.  One has 
    \begin{equation}\label{eq2:prop:qap-inner}
    \tZ := (t+2)E_{n^2} - (t+1)I_{n^2} \ge E_{n^2} > 0.
    \end{equation}
    Also, define 
    $$
    G = \bzero_{n \times n}, ~ H = (t+2) E.
    $$
Note that $(E \otimes \bone)(I \otimes \bone^T) = E \otimes \bone \bone^T = E_{n^2}$.  Subsequently, we have
%    \begin{equation}\label{eq3:prop:qap-inner}
$$
    \begin{aligned}
     A \otimes B + G \otimes E + E \otimes H - \tZ = & ~ A \otimes B + (t+2) E_{n^2} - \tZ \\
    = & ~ A \otimes B + (t+1) I_{n^2} 
    \end{aligned},
$$
%    \end{equation} 
where the last line follows from \eqref{eq2:prop:qap-inner}.  Since $t \geq -\lambda_{\rm min}(A \otimes B)$, we have $A \otimes B + (t+1) I_{n^2} \succeq  I_{n^2} \succ 0$.  Thus Slater's condition is satisfied, and we conclude strong duality. 
\end{proof}

\subsection{Exactness Conditions Based on Duality}

Our next result is to apply Proposition \ref{prop:qap-inner} and state a set of conditions under which the relaxation \eqref{eq:qap-sdp} is exact.  We begin by defining the following sets:
$$
\begin{aligned}
\mathcal{S} & := \mathrm{cl}(\{ \tX : G \otimes E + E \otimes H + \tX \succeq 0 \}), \\
\mathcal{T} & :=  \{ \tX : \vec(X)^T \tX \vec(X) \geq \vec(I)^T \tX \vec(I) ~ \text{for all} ~ X \in \mathrm{DS}(n) \}.
\end{aligned}
$$

\begin{proposition}\label{thm:tight-closure}
Suppose $I \in \text{Perm}(n)$ is an optimal solution to \eqref{eq:qap}.  Suppose there exists $\tZ \in \R^{n^2 \times n^2}$ such that the following conditions are satisfied:
\begin{enumerate}
    \item[(i)] $\tZ_{(i,k),(j,l)} \geq 0$ for all ($i \neq j,k \neq l$) and all ($i=j,k=l$),
    \item[(ii)] $\tZ_{(i,i),(j,j)}=0$ for all $i,j$, 
    \item[(iii)] $A \otimes B - \tZ \in \mathcal{S}$, and 
    \item[(iv)] $A \otimes B - \tZ \in \mathcal{T}$.
\end{enumerate}
Then $I$ is also an optimal solution to \eqref{eq:qap-sdp}; moreover, the optimal values of \eqref{eq:qap} and \eqref{eq:qap-sdp} are equal. 
\end{proposition}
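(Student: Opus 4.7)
The plan is to apply strong duality from Proposition~\ref{prop:qap-inner} to the inner sub-problem \eqref{eq:qap-sdp_Xsub} at each fixed $X \in \mathrm{DS}(n)$, with the aim of showing that its dual optimal value is at least $\mathrm{tr}(AB)$ uniformly in $X$. Since $(I, \vec(I)\vec(I)^T)$ is primal feasible for \eqref{eq:qap-sdp} with objective $\mathrm{tr}(AB)$, combining this uniform lower bound with weak duality and minimizing over $X$ would close the argument.

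The first step is to simplify the dual objective $f(\vec(X), G, H, \tZ)$ on doubly stochastic inputs. Using $X\bone = X^T\bone = \bone$, a short computation gives $\vec(X)^T(G \otimes E)\vec(X) = \langle G, E\rangle$ and $\vec(X)^T(E \otimes H)\vec(X) = \langle H, E \rangle$, and these exactly cancel the $-\langle G+H, E\rangle$ term in $f$, so that
\[
f(\vec(X), G, H, \tZ) \,=\, \mathrm{tr}(X A X^T B) - \vec(X)^T \tZ \vec(X).
\]
The vanishing of the $G, H$ contribution on $\mathrm{DS}(n)$ is the whole reason a single $\tZ$ can yield an $X$-uniform bound, and it also means that only convergence of the $\tZ$ component -- not the $G, H$ components -- will matter in the limiting argument below.

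Having secured the formula above, I would read off the target lower bound from hypotheses (ii) and (iv). Condition (ii) forces $\vec(I)^T \tZ \vec(I) = \sum_{i,j} \tZ_{(i,i),(j,j)} = 0$, so $\vec(I)^T (A \otimes B - \tZ) \vec(I) = \mathrm{tr}(AB)$; then $A \otimes B - \tZ \in \mathcal{T}$ upgrades this to $\vec(X)^T (A \otimes B - \tZ) \vec(X) \geq \mathrm{tr}(AB)$ for every $X \in \mathrm{DS}(n)$. Rearranging yields $\mathrm{tr}(X A X^T B) - \vec(X)^T \tZ \vec(X) \geq \mathrm{tr}(AB)$. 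If the given $\tZ$ were already dual feasible -- i.e., if $A \otimes B - \tZ$ lay in the shadow $\{\tY : \exists G, H, \tY + G \otimes E + E \otimes H \succeq 0\}$ rather than only in its closure $\mathcal{S}$ -- then weak duality together with Proposition~\ref{prop:qap-inner} would finish the proof immediately.

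The hard part will be turning the closure statement (iii) into a genuine sequence of dual feasible solutions. By the definition of $\mathcal{S}$, there exist $\tY_k \to A \otimes B - \tZ$ together with $G_k, H_k$ such that $\tY_k + G_k \otimes E + E \otimes H_k \succeq 0$; setting $\tZ_k := A \otimes B - \tY_k$, the PSD half of dual feasibility holds for $(G_k, H_k, \tZ_k)$ and $\tZ_k \to \tZ$. The delicate point is ensuring the sign conditions (i)--(ii) along the sequence: sign-constrained entries of $\tZ$ which equal zero may drift slightly negative in $\tZ_k$, and naive truncation of those entries breaks the PSD constraint. My plan is to carefully tailor the approximating sequence -- either by selecting $\tY_k$ from within a sub-shadow already compatible with the sign pattern of $\tZ$, or by restoring sign feasibility through small additive corrections that can be absorbed into modified $G_k, H_k$ -- so that each $(G_k, H_k, \tZ_k)$ is genuinely dual feasible while $\tZ_k \to \tZ$. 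Continuity of the dual objective then gives $f(\vec(X), G_k, H_k, \tZ_k) \to \mathrm{tr}(X A X^T B) - \vec(X)^T \tZ \vec(X) \geq \mathrm{tr}(AB)$, so the dual optimal value at $X$ is at least $\mathrm{tr}(AB)$, completing the proof upon minimizing over $X \in \mathrm{DS}(n)$. This is precisely the non-closedness phenomenon flagged in the introduction: a sequence of dual feasible solutions whose objective approaches, but may never attain, the primal value.
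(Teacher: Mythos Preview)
Your approach is the paper's: collapse the dual objective to $\vec(X)^T(A\otimes B-\tZ)\,\vec(X)$ on $\mathrm{DS}(n)$ via \eqref{eq:piab_vanishing}, invoke (ii) and (iv) for the lower bound $\tr(AB)$, and handle (iii) through an approximating sequence from the closure. You are in fact more careful than the paper on the sign-constraint issue you flag: the paper simply asserts that ``(1) and condition (i)'' make $(G_k,H_k,\tZ_k)$ dual feasible, even though (i) constrains only the limit $\tZ$, not the approximants $\tZ_k$.

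Your second suggested repair --- small additive corrections absorbed into $G_k,H_k$ --- works and is easy to make explicit. Let $\delta_k\ge 0$ be the largest negative part of $\tZ_k$ over the sign-constrained indices; $\delta_k\to 0$ by (i). Replace $\tZ_k$ by $\tilde\tZ_k:=\tZ_k+\delta_k E_{n^2}$, which is now nonnegative where required and still converges to $\tZ$, and replace $G_k,H_k$ by $G_k+\tfrac{\delta_k n}{2}I$, $H_k+\tfrac{\delta_k n}{2}I$. Since $E_{n^2}=E\otimes E$ and $\tfrac{n}{2}\,I\otimes E+\tfrac{n}{2}\,E\otimes I- E\otimes E\succeq 0$ (diagonalize on the tensor eigenbasis $\bone\otimes\bone$, $\bone\otimes\bv$, $\bu\otimes\bone$, $\bu\otimes\bv$ with $\bu,\bv\perp\bone$; the eigenvalues are $0,\tfrac{n^2}{2},\tfrac{n^2}{2},0$), the PSD constraint is preserved. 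One small correction to your write-up: only condition (i) is a dual feasibility constraint and needs to hold along the sequence; condition (ii) is used solely on the limit $\tZ$ in the final evaluation $\vec(I)^T\tZ\,\vec(I)=0$, so there is no need to enforce it for $\tZ_k$.
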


We point out that the set $\{ \tX : G \otimes E + E \otimes H + \tX \succeq 0 \}$ is {\em not} closed.  Moreover, there are examples of matrices $A$ and $B$ that necessarily require choices of $\tZ$ for which $A \otimes B - \tZ$ lines on the boundary -- we discuss this point in Section \ref{sec:geom}.

We proceed to prove Proposition \ref{thm:tight-closure}.  We need the following observation.  Suppose $X \in \text{DS}(n)$, and we let $\bx = \vec(X)$.  Then $(\bone^T \otimes I) \bx = \bone$ and $(I \otimes \bone^T) \bx = \bone$ where $\bx = \vec(X)$. This gives
$$
\bx^T (G \otimes E) \bx = \bx^T (G \otimes \bone \cdot I \otimes \bone^T) \bx = \bx^T G \otimes \bone \cdot \bone = \bx^T \vec(\bone \bone^T G) = \langle X,\bone \bone^T G \rangle = \langle G,E \rangle.
$$
Similarly, one has $\bx^T (E \otimes H) \bx = \langle H,E \rangle$).  Then it follows that
\begin{equation} \label{eq:piab_vanishing}
f(\bx,G,H,\tZ) = \bx^T (A \otimes B-\tZ) \bx  \quad \text{ whenever } \quad \bx = \vec(X) \text{ for some } X \in \text{DS}(n).
\end{equation}

\begin{proof}[Proof of Proposition \ref{thm:tight-closure}]
First, we define the function $g: \R^{n \times n} \rightarrow \R$
\begin{equation} \label{eq:g_definition}
\begin{aligned}
g(X) ~ := ~ \max_{G,H,\tZ} ~ f(\vec(X),G,H,\tZ) \quad \mathrm{s.t.} \quad &  A \otimes B + G \otimes E + E \otimes H \succeq \tZ \\ 
& \tZ_{(i,k),(j,l)} \ge 0 \text{ for all $i\not=j,k\not=l$ and $i=j,k=l$}
\end{aligned}.
\end{equation}

Second, it is given that $A \otimes B - \tZ \in \mathcal{S}$ (condition (iii)).  This means that there exists a sequence $\{(\tZ_k, G_k, H_k) \}_{k=1}^{\infty}$ such that (1) $(A \otimes B - \tZ_k) + (G_k \otimes E + E \otimes H_k) \succeq 0$ for all $k$, (2) $\tZ_k \rightarrow \tZ$.  It follows from (1) and condition (i) that $\{G_k,H_k,\tZ_k\}$ are feasible solutions to \eqref{eq:g_definition} for all $k$, so we have 
$$
g(X) \geq f(\vec(X), G_k, H_k, \tZ_k) \quad \text{ for all } \quad X \in \mathrm{DS}(n).
$$
One then has
\begin{equation*}
\begin{aligned}
\text{The opt. val. of \eqref{eq:qap-sdp}} & \overset{(a)}{=}  \min_{X \in \text{DS}(n)} g(X) \\
& \geq \min_{X \in \text{DS}(n)} f(\vec(X), G_k, H_k, \tZ_k) \\
& \overset{(b)}{=} \min_{X \in \text{DS}(n)} \vec (X)^T (A \otimes B - \tZ_k) ~ \vec (X),
\end{aligned}
\end{equation*}
for all $k \in \mathbb{N}$.  Here, (a) is from Proposition \eqref{prop:qap-inner}, while (b) is from \eqref{eq:piab_vanishing}.  By taking the limit $k \rightarrow \infty$, one has
$$
\text{The opt. val. of \eqref{eq:qap-sdp}} \geq \lim_{k \rightarrow \infty} \Big( \min_{X \in \mathrm{DS}(n)}  ~ \vec (X)^T (A \otimes B - \tZ_k)  \vec (X) \Big).
$$
We can view $\vec (X)^T (A \otimes B - \tZ_k)  \vec (X)$ as a function where $X$ is the argument.  Then $\{\vec (X)^T (A \otimes B - \tZ_k)  \vec (X)\}_{k}$ defines a sequence of continuous functions over $\mathrm{DS}(n)$, which is compact.  Since $\tZ_k \rightarrow \tZ$, it follows that this sequence converges uniformly to the function $\vec (X)^T (A \otimes B - \tZ)  \vec (X)$.  This allows us to conclude that
$$
\lim_{k \rightarrow \infty} \min_{X \in \mathrm{DS}(n)}  ~ \vec (X)^T (A \otimes B - \tZ_k)  \vec (X) ~=~  \min_{X \in \mathrm{DS}(n)} ~ \vec (X)^T (A \otimes B - \tZ)  \vec (X).
$$

Finally, we note that
\begin{equation*}
\begin{aligned}
\min_{X \in \mathrm{DS}(n)} ~ \vec (X)^T (A \otimes B - \tZ)  \vec (X)  & ~\overset{(c)}{=}~ \vec (I)^T (A \otimes B - \tZ) \, \vec (I) \\
& ~\overset{(d)}{=}~ \vec (I)^T A \otimes B \, \vec (I) \\
& ~\geq~ \text{The opt. val. of \eqref{eq:qap}}.
\end{aligned}
\end{equation*}
Here, (c) follows from condition (iv), and (d) follows from condition (ii).  Now note that the optimal value of \eqref{eq:qap-sdp} is a lower bound to the optimal value of \eqref{eq:qap}, and hence all of these inequalities are in fact equalities, from which we conclude our result.
\end{proof}

\subsection{Further Simplifications of $\mathcal{S}$ and $\mathcal{T}$} \label{sec:s_and_t}

Our next step is to make clever choices of the operator $\tZ$ for which we can certify exactness for certain families of matrices $A$ and $B$.  Unfortunately, the conditions in Proposition \ref{thm:tight-closure}, as it is stated, are not easy to work with.  Our next few steps are to find simpler characterizations of sets that belong to $\mathcal{S}$ and $\mathcal{T}$.

We start with $\mathcal{S}$.  It is straightforward to see that this is a closed, convex cone.  

\begin{lemma}\label{thm:t_cone_description1}
Let $M = \bu \bone^T + \bone \bu^T$, and let $\tP = (E_{ij}+E_{ji}) \otimes M \in \bbR^{n^2\times n^2}$ for some $i,j \in [n]$.  Then $\tP \in \mathcal{S}$.
\end{lemma}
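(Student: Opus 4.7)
The plan is to exhibit $\tP$ as a limit of matrices $\tX_c$ satisfying $G_c \otimes E + E \otimes H_c + \tX_c \succeq 0$ for some $G_c, H_c$. A finite witness (no limit) turns out to be impossible whenever $\bu \notin \mathrm{span}(\bone)$, so passing to the closure is genuinely necessary here.

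First I would translate PSD-ness into a quadratic form on $\R^{n^2}$. Writing a test vector as $\bv = \sum_a \be_a \otimes \bv_a$ and setting $s_a := \bone^T \bv_a$, $w_a := \bu^T \bv_a$, $\bs := (s_1,\ldots,s_n)$, $\bt := \sum_a \bv_a$, a short computation using $E = \bone\bone^T$ and $M = \bu\bone^T + \bone\bu^T$ gives
\begin{equation*}
\bv^T (G \otimes E + E \otimes H + \tP)\bv \;=\; \bs^T G \bs + \bt^T H \bt + 2(w_i s_j + w_j s_i).
\end{equation*}
Via $G, H$ one can prescribe arbitrary quadratic forms in $\bs$ and $\bt$, but the cross term from $\tP$ is only linear in $w_i, w_j$ and cannot be dominated quadratically by any finite $G, H$; this forces a perturbation of $\tP$ itself.

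I would then set, for each $c > 0$,
\begin{equation*}
\tX_c \,:=\, \tP + \tfrac{1}{c}(E_{ii}+E_{jj}) \otimes \bu\bu^T, \qquad G_c \,:=\, c(E_{ii}+E_{jj}), \qquad H_c \,:=\, \bzero,
\end{equation*}
so that the added PSD perturbation supplies exactly the missing $w_i^2, w_j^2$ control. A direct computation, using that the $(a,a)$ block of $(E_{ii}+E_{jj})\otimes K$ is $K$ precisely for $a\in\{i,j\}$, gives
\begin{equation*}
\bv^T (G_c\otimes E + E\otimes H_c + \tX_c)\bv \,=\, c(s_i^2+s_j^2) + 2(w_i s_j + w_j s_i) + \tfrac{1}{c}(w_i^2 + w_j^2) \,=\, \bigl(\sqrt c\, s_i + \tfrac{w_j}{\sqrt c}\bigr)^2 + \bigl(\sqrt c\, s_j + \tfrac{w_i}{\sqrt c}\bigr)^2 \,\ge\, 0.
\end{equation*}
Hence each $\tX_c$ lies in the set whose closure defines $\mathcal{S}$, and since $\|\tX_c - \tP\| = O(1/c) \to 0$, we conclude $\tP \in \mathcal{S}$.

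The main subtlety is recognizing that $\tP$ typically sits on the boundary of $\mathcal{S}$, so only a limiting argument can succeed. Once this is accepted, the $(c, 1/c)$ scaling is dictated by the elementary identity $c s^2 + 2 s w + w^2/c = (\sqrt c\, s + w/\sqrt c)^2$, and the tensor factor $E_{ii} + E_{jj}$ is chosen to isolate precisely the two columns $i, j$ of the test vector that appear in the cross term coming from $\tP$.
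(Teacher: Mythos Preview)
Your proof is correct and takes a cleaner route than the paper's. The paper argues by explicit construction: for $i\neq j$ it writes down three vectors $\bv_{1,k}, \bv_{2,k}, \bv_{3,k}\in\R^{n^2}$ depending on a parameter $k$, forms the PSD matrix $\tV_k = (k/4)(2\bv_{1,k}\bv_{1,k}^T + 2\bv_{2,k}\bv_{2,k}^T + \bv_{3,k}\bv_{3,k}^T)$, and then exhibits specific $G_k, H_k$ so that $\tP_k := \tV_k - G_k\otimes E - E\otimes H_k \to \tP$; the case $i=j$ is handled separately with a single rank-one vector. You instead pass directly to the quadratic form, isolate the sole obstruction---the cross term $2(w_is_j+w_js_i)$, which is linear in $w_i,w_j$ and hence cannot be dominated by any choice of $G,H$ alone---and kill it with the minimal perturbation $\tfrac{1}{c}(E_{ii}+E_{jj})\otimes\bu\bu^T$ paired with $G_c = c(E_{ii}+E_{jj})$, so the entire expression collapses to a sum of two squares. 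Your argument is shorter, treats $i=j$ and $i\neq j$ uniformly (the displayed identity specializes correctly when $i=j$), and makes transparent both why the closure is genuinely needed and why the $(c,1/c)$ scaling is forced. The paper's approach, by contrast, certifies PSD-ness via an explicit rank-one decomposition, which is more laborious but does not require the reader to unpack the quadratic form in block coordinates.
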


\begin{proof}[Proof of Lemma \ref{thm:t_cone_description1}]

We split the proof cases $i = j$ and $i \neq j$. 

[Case $i \neq j$]:  Without loss of generality we may take $i=1$ and $j=2$.  We show that there exists a sequence of tuples of matrices $\{G_k,H_k,\tP_k\}_{k=1}^\infty$ satisfying (i) $G_{k} \otimes E + E \otimes H_k + \tP_k \succeq 0 $, and (ii) $\tP_k \rightarrow \tP$ as $k \rightarrow \infty$. In what follows, we denote $\bzero = (0,\ldots,0)^T \in \mathbb{R}^{n}$ (we will specify the dimensions if it is not $n$). And for $k > 0$, we define
$$
\begin{aligned}
\bv_{1,k} ~:=~ & (\bone - \bu/k, - \bone + \bu/k, \bzero ,\ldots, \bzero)^T, \\
\bv_{2,k} ~:=~ & (\bzero, \bzero, \bone - \bu/k, \ldots, \bone - \bu/k)^T, \\
\bv_{3,k} ~:=~ & (\bone + \bu/k, \bone + \bu/k, - \bone - \bu/k, \ldots, \ldots, - \bone - \bu/k)^T. 
\end{aligned}
$$
Define $\tV_k := (k/4) (2 \bv_{1,k} \bv_{1,k}^T + 2 \bv_{2,k} \bv_{2,k}^T + \bv_{3,k} \bv_{3,k}^T)$.  Then one has
$$
\tV_k = 
\frac{k}{4}\left(
\begin{array}{c|c}
    \begin{array}{cc}
        3E - M/k & - E + 3M/k \\
        - E + 3M/k & 3E - M/k
    \end{array} & E_{2 \times (n-2)} \otimes (-E - M/k) \\
    \hline
    E_{2 \times (n-2)} \otimes (-E - M/k) & E_{(n-2) \times (n-2)} \otimes (3E - M/k )
\end{array}
\right) + O(1) E_{n^2 \times n^2}.
$$
Similarly, define 
$$
G_k := \frac{k}{4}\left(
\begin{array}{c|c}
    \begin{array}{cc}
        3 & -1 \\
        -1 & 3
    \end{array} & E_{2 \times (n-2)} \\
    \hline
    E_{2 \times (n-2)} & E_{(n-2) \times (n-2)}
\end{array}
\right), \qquad H_k := -\frac{M}{4},
$$ 
and let $\tP_k := \tV_k - G_k \otimes E - E \otimes H_k$.  Then note that
$$
\tP_k = \tP + O(1) E_{n^2 \times n^2},
$$
and in particular, $\tP_k \rightarrow \tP$ as $k \rightarrow \infty$.  Moreover, since $\tV_k$ is the sum of three rank-one PSD components, one has $\tV_k \succeq 0$, from which one has $G_k \otimes E + E \otimes H_k + \tP_k \succeq 0$.  This concludes the result. 

[Case $i=j$]:  The proof in this case is similar.  Without loss of generality we take $i=j=1$.  Define
\begin{equation*}
\begin{aligned}
\bv_k ~:=~ & (\bone+\bu/k,\bzero,\ldots,\bzero)^T, \\
\tV_k ~:=~ & k \bv_k \bv_k^T = k \left(
\begin{array}{c|c}
    E + M/k & \bzero_{n \times n(n-1)} \\
\hline
    \bzero_{n(n-1) \times n} & \bzero_{n(n-1)\times n(n-1)}
\end{array}
\right) + O(1)E_{n^2\times n^2}, \\
G_k ~:=~ & k \left(
\begin{array}{c|c}
    1 & \bzero_{1 \times (n-1)} \\
\hline
    \bzero_{(n-1) \times 1} & \bzero_{(n-1)\times (n-1)}
\end{array}
\right).
\end{aligned}
\end{equation*}
This time, $\tV_k$ is rank-one PSD.  Let $\tP_k := \tV_k - G_k \otimes E$, from which we conclude the result.
\end{proof}

% in the specific instance where $F = E_{12} + E_{21}$.
%
%We consider a more general $F$.  We write $F$ as the sum of basis elements of the form $E_{i,j}+E_{j,i}$ where $i\neq j$.  By repeating the above construction, we obtain a series of sequences $\{S_k^{(i,j)}\}$, $\{T_k^{(i,j)}\}$, and $\{P_k^{(i,j)}\}$ for each (ordered) pair $(i,j)$.  We define $S_k = \sum_{(i,j)} S_k^{(i,j)}$ etc, and note that the conditions $S_{k} \otimes E + E \otimes T_k \succeq P_k $ and $P_k \rightarrow P$ are preserved for sums.  This completes the construction for general binary symmetric $F$ with zero diagonal entries.

\begin{proposition}\label{thm:inside_s}
Suppose $\tP$ is of the form
$$
\tP = \sum_{i, j} (E_{ij}+E_{ji}) \otimes ( \bu^{(ij)} \bone^T + \bone \bu^{(ij)T})  
+ ( \bv^{(ij)} \bone^T + \bone \bv^{(ij)T}) \otimes (E_{ij}+E_{ji}),
$$
Then $\tP \in \mathcal{S}$.
\end{proposition}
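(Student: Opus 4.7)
The plan is to build $\tP$ from simpler atoms already handled by Lemma \ref{thm:t_cone_description1}. Specifically, $\tP$ is a finite sum of two types of terms: the first type, $(E_{ij}+E_{ji}) \otimes (\bu^{(ij)} \bone^T + \bone \bu^{(ij)T})$, is precisely the form covered by Lemma \ref{thm:t_cone_description1}; the second type, $(\bv^{(ij)} \bone^T + \bone \bv^{(ij)T}) \otimes (E_{ij}+E_{ji})$, is the Kronecker-swap of the first. So the strategy is (i) to show $\mathcal{S}$ is closed under addition, (ii) to handle the second type by symmetry, and (iii) to add up the pieces.

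First I would check that $\mathcal{S}$ is a closed convex cone. The inner set $S_0 := \{\tX : \exists G, H,\; G \otimes E + E \otimes H + \tX \succeq 0\}$ is closed under non-negative linear combinations, since if $(G_r,H_r)$ witnesses $\tX_r$ for $r=1,2$ and $\alpha,\beta \geq 0$, then $(\alpha G_1 + \beta G_2, \alpha H_1 + \beta H_2)$ witnesses $\alpha \tX_1 + \beta \tX_2$ via summing the PSD inequalities. Taking closure preserves these properties, so $\mathcal{S}$ is a closed convex cone.

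Next I would establish the Kronecker-swap invariance. Let $P_\sigma$ denote the permutation matrix on $\R^{n^2}$ implementing the swap $X \otimes Y \mapsto Y \otimes X$ (via conjugation). Then $G \otimes E + E \otimes H + \tX \succeq 0$ conjugated by $P_\sigma$ gives $E \otimes G + H \otimes E + P_\sigma \tX P_\sigma^T \succeq 0$, which is of the same form after relabelling $(G,H) \mapsto (H,G)$. Hence $\tX \in \mathcal{S}$ iff $P_\sigma \tX P_\sigma^T \in \mathcal{S}$. Applying this to Lemma \ref{thm:t_cone_description1} shows that each summand of the second type also lies in $\mathcal{S}$.

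Finally, I would invoke closure under addition: $\tP$ is a finite sum of elements of $\mathcal{S}$, so $\tP \in \mathcal{S}$. I don't foresee a major obstacle here, since the real work has already been done in Lemma \ref{thm:t_cone_description1}; the only subtleties are bookkeeping (checking that the $i=j$ and $i \neq j$ cases are both absorbed) and the Kronecker-swap argument, which is routine.
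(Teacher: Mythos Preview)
Your proposal is correct and follows essentially the same approach as the paper: use that $\mathcal{S}$ is a (closed convex) cone, decompose $\tP$ into atoms, and invoke Lemma~\ref{thm:t_cone_description1} for each atom. Your explicit Kronecker-swap argument for the $\bv^{(ij)}$-type summands is in fact more careful than the paper's own proof, which only names the $\bu^{(ij)}$-type summands and leaves the symmetry in the roles of $G$ and $H$ implicit.
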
 
\begin{proof}
Recall that $\mathcal{S}$ is a cone.  As such, it suffices to show that each of the summands $(E_{ij}+E_{ji}) \otimes ( \bu^{(ij)} \bone^T + \bone \bu^{(ij)T})$ belong to $\mathcal{S}$.  But this is precisely Lemma \ref{thm:t_cone_description1}.
\end{proof}

\begin{lemma}\label{thm:simplify_T}
Suppose
\begin{equation}\label{eq:z_form}
\tP = \sum_{i, j} (E_{ij}+E_{ji}) \otimes ( \bu^{(ij)} \bone^T + \bone \bu^{(ij)T})  
+ ( \bv^{(ij)} \bone^T + \bone \bv^{(ij)T}) \otimes (E_{ij}+E_{ji}).
\end{equation}
Then $\tP \in \mathcal{T}$ if and only if the following inequality holds for all permutations $\sigma$
\begin{equation}\label{eq:iv_simplified}
\sum_{i,j}  u^{(ij)}_{\sigma(i)}+u^{(ij)}_{\sigma(j)} + v^{(ij)}_{\sigma^{-1}(i)}+v^{(ij)}_{\sigma^{-1}(j)} \geq \sum_{i, j} \bu^{(ij)}_{i} + \bu^{(ij)}_{j} + \bv^{(ij)}_{i}+\bv^{(ij)}_{j}.
\end{equation}
\end{lemma}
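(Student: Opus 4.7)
The plan is to establish that, for $\tP$ of the prescribed form and for every $X \in \mathrm{DS}(n)$, the quadratic form $\vec(X)^T \tP \vec(X)$ collapses to a \emph{linear} functional of $X$. Once that reduction is in hand, membership in $\mathcal{T}$ becomes an inequality between two linear functionals over the polytope $\mathrm{DS}(n)$, which by Birkhoff--von Neumann need only be tested at its vertices -- the permutation matrices -- and that test is exactly \eqref{eq:iv_simplified}.

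Concretely, I would apply the identity $\vec(X)^T (A \otimes B) \vec(X) = \tr(X^T B X A^T)$ to each summand of $\tP$. For a block of the first type, $(E_{ij}+E_{ji}) \otimes (\bu \bone^T + \bone \bu^T)$, this becomes $\tr\bigl(X^T (\bu \bone^T + \bone \bu^T) X (E_{ij}+E_{ji})\bigr)$. The doubly stochastic identities $X \bone = X^T \bone = \bone$ let one simplify $X^T \bu \bone^T X = X^T \bu \bone^T$ and $X^T \bone \bu^T X = \bone \bu^T X$, so the middle factor reduces to $X^T \bu \bone^T + \bone \bu^T X$. Pairing with $E_{ij}+E_{ji}$ and using $\bone^T E_{ij} = \be_j^T$ together with $E_{ij}\bone = \be_i$ then collapses the trace to $2\bigl((X^T \bu)_i + (X^T \bu)_j\bigr)$. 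An entirely analogous calculation for the second family of blocks yields $2\bigl((X \bv)_i + (X \bv)_j\bigr)$. Summing over $i,j$ gives, for every $X \in \mathrm{DS}(n)$,
\begin{equation*}
\vec(X)^T \tP \vec(X) = 2 \sum_{i,j} \left[ (X^T \bu^{(ij)})_i + (X^T \bu^{(ij)})_j + (X \bv^{(ij)})_i + (X \bv^{(ij)})_j \right],
\end{equation*}
which is linear in $X$ as desired.

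From linearity and $\mathrm{DS}(n) = \mathrm{conv}(\mathrm{Perm}(n))$, the inequality $\vec(X)^T \tP \vec(X) \geq \vec(I)^T \tP \vec(I)$ holds for every $X \in \mathrm{DS}(n)$ if and only if it holds at each permutation matrix $P_\sigma$. Substituting such a $P_\sigma$ (with $P_\sigma \be_j = \be_{\sigma(j)}$) gives $(P_\sigma^T \bu)_i = \bu_{\sigma(i)}$ and $(P_\sigma \bv)_i = \bv_{\sigma^{-1}(i)}$, and the resulting inequality matches \eqref{eq:iv_simplified} after cancelling the common factor $2$. The ``only if'' direction is then immediate since every $P_\sigma$ lies in $\mathrm{DS}(n)$, while the ``if'' direction uses linearity to lift the inequality from the extreme points to all of $\mathrm{DS}(n)$. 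No serious obstacle arises beyond careful index bookkeeping; the whole proof rests on the observation that the combined $\bu \bone^T + \bone \bu^T$ block structure and the marginal constraints degenerate the quadratic form to a linear one on $\mathrm{DS}(n)$.
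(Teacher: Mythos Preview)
Your proposal is correct and follows essentially the same route as the paper: both arguments compute $\vec(X)^T \tP \vec(X)$ via the trace identity, exploit $X\bone = X^T\bone = \bone$ to collapse the quadratic form to a linear functional in $X$ (the paper writes this as $2\langle X, \bu(\be_i+\be_j)^T\rangle$, which is exactly your $2((X^T\bu)_i + (X^T\bu)_j)$), and then invoke Birkhoff--von Neumann to reduce the test to permutation matrices.
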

\begin{proof}[Proof of Lemma \ref{thm:simplify_T}]
To simplify notation, let's suppose $\tP = (E_{ij}+E_{ji}) \otimes ( \bu \bone^T + \bone \bu^{T}) $.  Let $X \in \mathrm{DS}(n)$.  One then has
\begin{equation*}
\begin{aligned}
& \vec(X)^T (E_{ij}+E_{ji}) \otimes ( \bu \bone^T + \bone \bu^T)  \vec(X) \\
=~ & \mathrm{tr}( X^T ( \bu \bone^T + \bone \bu^T)  X (E_{ij}+E_{ji}) ) \\
=~ & \mathrm{tr}( X^T ( \bu \bone^T )  X (E_{ij}+E_{ji}) ) + \mathrm{tr}( X^T ( \bone \bu)  X (E_{ij}+E_{ji}) ) \\
=~ & \mathrm{tr}( X^T ( \bu \bone^T )  (E_{ij}+E_{ji}) ) + \mathrm{tr}(  \bone \bu^T  X (E_{ij}+E_{ji}) ).
\end{aligned}
\end{equation*}
In the last line, we used the fact that $X^T \bone = \bone$ since $X$ is doubly stochastic.  By noting cyclicity of trace, the last line simplifies to
\begin{equation*}
(\be_i + \be_j)^T X^T  \bu + \bu^T  X (\be_i + \be_j) = 2 \langle X, \bu (\be_i+\be_j)^T \rangle.
\end{equation*}
More generally, suppose $\tP$ has the form \eqref{eq:z_form}.  Then $\tP \in \mathcal{T}$ if and only if
\begin{equation*}
\langle X - I, \sum_{i,j} \bu^{(ij)} (\be_i+\be_{j})^T + (\be_{i}+\be_{j}) \bv^{(ij)T} \rangle \geq 0,
\end{equation*}
for all $X \in \mathrm{DS}(n)$.  But because the above is a linear constraint, by the Birkhoff-von Neumann's theorem, it suffices to check that this holds for all permutation matrices $X$.  This then implies \eqref{eq:iv_simplified} for all permutations $\sigma$.

In the reverse direction, we simply check that every implication holds in both directions.  We omit this part of the proof.
%Now suppose \eqref{eq:iv_simplified} for all permutations $\sigma$.  Note that all steps are reversible, from which we obtain the reverse conclusion.
\end{proof}

% \subsection{Proof of Theorem \ref{thm:exactness_indeterminate} and Theorem \ref{thm:exactness_indeterminate2}}
\subsection{Proof of Theorem \ref{thm:exactness_indeterminate}}

\begin{proof}[Proof of Theorem \ref{thm:exactness_indeterminate}]
We complete the proof using Proposition \ref{thm:tight-closure}. Define
\begin{equation} \label{eq:construction_of_P}
\tP = \sum_{i, j} (E_{ij}+E_{ji}) \otimes ( \bu^{(ij)} \bone^T + \bone \bu^{(ij)T})  
+ ( \bv^{(ij)} \bone^T + \bone \bv^{(ij)T}) \otimes (E_{ij}+E_{ji}).
\end{equation}
Define $\tZ = A \otimes B - \tP$.  Then
$$
\tZ_{(i,k),(j,l)} = A_{ij} B_{kl} - \tP_{(i,k),(j,l)} = A_{ij} B_{kl} - ( u^{(ij)}_{k} + u^{(ij)}_{l} + v^{(kl)}_{i} + v^{(kl)}_{j} ).
$$
Requirement 1 is satisfied by Assumption 1.  Requirement 2 is satisfied by Assumption 2.  Requirement 3 follows from Proposition \ref{thm:inside_s} on $\tP$.  Requirement 4 follows from Lemma \ref{thm:simplify_T} and Assumption 3.  This proves the result.
\end{proof}

\section{Geometric Aspects of the Dual Problem} \label{sec:geom}

One technical difficulty we encountered in our analysis is that the set of dual feasible solutions is {\em not} closed.  This meant that there are specific families of matrices for the semidefinite relaxation \eqref{eq:qap-sdp} is exact, but there does not exist any dual feasible solutions that certify exactness.  In Section \ref{sec:s_and_t}, we address this technical difficulty by constructing a sequence of dual feasible solutions such that the dual objective converges to the primal optimal value.  We discuss this aspect in greater detail.

Consider the following pair of matrices 
$$
A = \left( \begin{array}{cccc}
0 & 1 & 0 & 0 \\ 1 & 0 & 0 & 0 \\ 0 & 0 & 0 & 1 \\ 0 & 0 & 1 & 0 
\end{array}\right),
B = - \left( \begin{array}{cccc}
0 & 1 & 0 & 0 \\ 1 & 0 & 1 & 0 \\ 0 & 1 & 0 & 0 \\ 0 & 0 & 0 & 0 
\end{array}\right).
$$

Consider the following variable $\tP$
\begin{equation*}
\tP = 
\left( \begin{array}{cccc}
0 & P & 0 & 0 \\ P & 0 & 0 & 0 \\ 0 & 0 & 0 & P \\ 0 & 0 & P & 0 
\end{array} \right)
\qquad \text{where} \qquad
P = - \left( \begin{array}{cccc}
0 & 1 & 0 & 0 \\ 1 & 2 & 1 & 1 \\ 0 & 1 & 0 & 0 \\ 0 & 1 & 0 & 0 
\end{array} \right).
\end{equation*}
We define $\tZ$ such that $A \otimes B - \tZ = \tP$.  One can check that this specific choice of $\tZ$ satisfies the conditions in Proposition \ref{thm:tight-closure}: (i) $\tZ_{(i,j),(k,l)} \geq 0$ for all ($i \neq k,j \neq l$) and all ($i=k,j=l$), (ii) $\tZ_{(i,i),(j,j)}=0$ for all $i,j$, and (iv) $\tP = A \otimes B - \tZ \in \mathcal{T}$.  (Indeed, it is clear that these conditions specify closed convex sets since they are specified as linear inequalities.)  One has the following:

\begin{proposition}
One has $\tP \in \mathrm{cl}(\{ \tX : G \otimes E + E \otimes H + \tX \succeq 0 \})$.
\end{proposition}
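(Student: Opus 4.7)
The plan is to reduce the statement to a direct application of Lemma \ref{thm:t_cone_description1}. The key structural observation is that the $4\times 4$ block $P$ appearing inside $\tP$ itself has the symmetric rank-two form $\bu\bone^T+\bone\bu^T$ that Lemma \ref{thm:t_cone_description1} is designed to handle, and that $\tP$ is supported on off-diagonal blocks in exactly the Kronecker pattern $(E_{ij}+E_{ji})\otimes(\bu\bone^T+\bone\bu^T)$ treated by the lemma.

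Concretely, I would first factorize $P=\bu\bone^T+\bone\bu^T$ with $\bu=(0,-1,0,0)^T$. Reading the diagonal forces $u_i=P_{ii}/2$, and one then checks the off-diagonal entries $P_{ij}=u_i+u_j$ against the displayed matrix. Viewing $\tP$ as a $4\times 4$ array of $4\times 4$ blocks, the only nonzero blocks lie at positions $(1,2),(2,1),(3,4),(4,3)$, each equal to $P$, so
$$
\tP = (E_{12}+E_{21})\otimes P + (E_{34}+E_{43})\otimes P.
$$
Each summand is of the form required by Lemma \ref{thm:t_cone_description1} in the $i\neq j$ case, hence each summand lies in $\mathcal{S}$. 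To conclude, I would note that $\mathcal{S}$ is closed under addition: if $G_r\otimes E + E\otimes H_r + \tX_r \succeq 0$ for $r=1,2$, then the sum $(G_1+G_2)\otimes E + E\otimes(H_1+H_2) + (\tX_1+\tX_2)$ is PSD, and this property passes to closures. Therefore $\tP \in \mathcal{S}$, which is the claim.

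I do not expect a genuine obstacle here. In particular I do not need to construct an approximating sequence $\{(G_k,H_k,\tP_k)\}$ from scratch, because the lemma already supplies such sequences for the two summands and they can be added termwise to witness $\tP \in \mathcal{S}$. The only point worth flagging is a subtle one that motivates the whole section: the set $\{\tX:G\otimes E+E\otimes H+\tX\succeq 0\}$ is itself \emph{not} closed, so the convex-cone argument genuinely has to be carried out at the level of approximating sequences rather than for exact dual solutions -- which is precisely why the definition of $\mathcal{S}$ takes a closure. Identifying the factorization $P=\bu\bone^T+\bone\bu^T$ is the one creative step; everything else is bookkeeping built on Lemma \ref{thm:t_cone_description1}.
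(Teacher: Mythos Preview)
Your proposal is correct and follows essentially the same route as the paper: write $P=-\be_2\bone^T-\bone\be_2^T$ (your $\bu=(0,-1,0,0)^T=-\be_2$), decompose $\tP=(E_{12}+E_{21})\otimes P+(E_{34}+E_{43})\otimes P$, apply Lemma~\ref{thm:t_cone_description1} to each summand, and conclude via the fact that $\mathcal{S}$ is a convex cone. The paper phrases the last step as an appeal to Proposition~\ref{thm:inside_s}, but that proposition is exactly the ``sum of Lemma~\ref{thm:t_cone_description1} pieces lies in $\mathcal{S}$'' argument you spelled out directly.
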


In what follows, $\be_i$ denotes the standard basis vector whose $i$-th coordinate is one and whose remaining entries are zero.

\begin{proof}
This is a direct application of Proposition \ref{thm:inside_s}, and in fact, Lemma \ref{thm:t_cone_description1}.  In particular, one can write $P = - \be_2 \bone^T - \bone \be_2^T$, which is the form in Lemma \ref{thm:t_cone_description1}, from which the result follows.
\end{proof}

On the other hand:

\begin{proposition}
One has $\tP \notin \{ \tX : G \otimes E + E \otimes H + \tX \succeq 0 \}$.
\end{proposition}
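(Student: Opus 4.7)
The plan is to show that for every choice of $G, H \in \R^{4\times 4}$, there exists a vector $\bx \in \R^{16}$ with $\bx^T(G \otimes E + E \otimes H + \tP)\bx < 0$. Separable test vectors $\bx = \ba \otimes \bb$ are natural here because the three contributions factor cleanly as $(\ba^T G \ba)(\bone^T \bb)^2$, $(\bone^T \ba)^2(\bb^T H \bb)$, and, using $\tP = A \otimes P$, as $(\ba^T A \ba)(\bb^T P \bb)$. The aim is to select $\ba, \bb$ so that the first quantity stays bounded, the second vanishes identically, and the third can be driven to $-\infty$ independently of $G$ and $H$.

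I would take $\ba = \be_1 - \be_2$. This forces $\bone^T \ba = 0$, which eliminates the $E \otimes H$ contribution entirely; moreover, since the top-left $2 \times 2$ block of $A$ is the swap matrix, $A\ba = -\ba$, yielding $\ba^T A \ba = -2$. For $\bb$, the key observation is that $P = -(\be_2 \bone^T + \bone \be_2^T)$, so $\bb^T P \bb = -2 b_2 (\bone^T \bb)$. Setting $\bb_M := (1+M)\be_1 - M \be_2$ for $M > 0$ keeps $\bone^T \bb_M = 1$ fixed while sending $(\bb_M)_2 = -M$ to $-\infty$. With $\bx_M := \ba \otimes \bb_M$, the full quadratic form then evaluates to
\[
\bx_M^T \bigl(G \otimes E + E \otimes H + \tP\bigr) \bx_M ~=~ \ba^T G \ba \,+\, 0 \,+\, (-2)(2M) ~=~ \ba^T G \ba - 4M,
\]
which tends to $-\infty$ as $M \to \infty$, regardless of $G$ and $H$. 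This contradicts positive semidefiniteness.

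The main conceptual step — not really a serious obstacle once the structure of $\tP = A \otimes P$ is apparent — is recognizing that the correction $G \otimes E + E \otimes H$ can only act nontrivially along directions where $\bone^T \ba$ or $\bone^T \bb$ is nonzero, whereas $\tP$ admits unboundedly negative directions that satisfy $\bone^T \ba = 0$ and keep $\bone^T \bb$ fixed. Choosing $\ba$ in the negative eigenspace of $A$ and orthogonal to $\bone$ annihilates the $H$-correction; the rank-one structure of $P$ (supported on the second coordinate) then supplies a one-parameter direction along which $\tP$ blows down while the $G$-correction remains bounded, precisely witnessing the non-closedness of $\{\tX : G \otimes E + E \otimes H + \tX \succeq 0\}$.
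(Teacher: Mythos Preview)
Your proof is correct. The approach, however, is more direct than the paper's. The paper applies an orthogonal change of basis $\tU$ that block-diagonalizes $\tP = A \otimes P$, observes that in the $(2,2)$ diagonal block the $E \otimes H$ term vanishes while $G \otimes E$ contributes only a scalar multiple $sE$, and then proves by an explicit eigenvalue computation that $-P + sE$ has a negative eigenvalue for every $s$. Your argument bypasses both the change of basis and the eigenvalue calculation: choosing $\ba = \be_1 - \be_2$ is exactly picking the relevant $-1$-eigenvector of $A$ (which, crucially, is orthogonal to $\bone$), so you are effectively probing the same $(2,2)$ block; and your one-parameter family $\bb_M$ directly witnesses that $-P + sE$ is never PSD, since $\bb_M^T(-P + sE)\bb_M = -2M + s$. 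What your route buys is economy and transparency about the mechanism (the corrections $G \otimes E$ and $E \otimes H$ only couple through $\bone^T\ba$ and $\bone^T\bb$, while $\tP$ is unbounded below along directions that keep these controlled); what the paper's route buys is a slightly more structural picture of how the block decomposition isolates the obstruction.
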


\begin{proof}
We need to show that there does not exist any choices of $G$ and $H$ such that $ G \otimes E + E \otimes H + \tP$ is PSD.  We apply a change of coordinates via the basis 
\begin{equation*}
\tU = 
\frac{1}{\sqrt{2}}\left( \begin{array}{cccc}
I & I & 0 & 0 \\ I & -I & 0 & 0 \\ 0 & 0 & I & I \\ 0 & 0 & I & -I 
\end{array} \right), \qquad \text{ where } \qquad \tU \tP \tU^T = 
\left( \begin{array}{cccc}
P & 0 & 0 & 0 \\ 0 & -P & 0 & 0 \\ 0 & 0 & P & 0 \\ 0 & 0 & 0 & -P
\end{array} \right).
\end{equation*}

Under this change of basis, matrices of the form $G \otimes E$ and $E\otimes H$ are transformed as follows.  First, $\tU (G \otimes E) \tU^T = \tilde{G} \otimes E$ for some $\tilde{G}$; that is, it retains the same structure.  Second, 
\begin{equation*}
\tU (E \otimes H) \tU^T = \left( \begin{array}{cccc}
2H & 0 & 2H & 0 \\ 0 & 0 & 0 & 0 \\ 2H & 0 & 2H & 0 \\ 0 & 0 & 0 & 0
\end{array} \right)
\end{equation*}

In the new basis, we claim that the (2,2) (as well as the (4,4)) block minor (of size $4 \times 4$) is never positive semidefinite, for any choice of $\tilde{G}$ or $H$.  First, $H$ has zero contribution to the $(2,2)$ and $(4,4)$ block.  Second, $\tilde{G}$ contributes a constant in this block.  As such, our task is to show that there does not exist a scalar $s$ such that $-P+sE$ is PSD.  We show in particular that $-P+sE$ has a strictly negative eigenvalue.  

Write $-P + s E = \be_2 \bone^T + \bone \be_2^T + s \bone \bone^T$.  Consider vector $\bv = \alpha \bone + \beta \be_2$.  Then one has $(-P + s E) \bv= (\be_2 \bone^T + \bone \be_2^T + s \bone \bone^T) (\alpha \bone + \beta \be_2) = (4\alpha s + \beta s + \alpha + \beta)\bone + (4\alpha+\beta)\be_2$.  Suppose we set
$$
\alpha = s+1, \beta = -2s - 2 \sqrt{s^2+s+1}.
$$
One can verify that 
$$
\frac{\alpha}{\beta} = \frac{4\alpha s + \beta s + \alpha + \beta}{4\alpha+\beta};
$$
that is, $\bv$ is an eigenvector of $-P + s E$ with eigenvalue
$$
\frac{4\alpha s + \beta s + \alpha + \beta}{\alpha} = 2s + 1 - 2 \sqrt{s^2+s+1}.
$$
By noting that $4s^2 + 4s + 1 < 4 (s^2+s+1)$, we conclude that this eigenvalue must be negative for all $s$.  Therefore the $(2,2)$-block always contains a negative eigenvalue, which completes the proof.
\end{proof}

We point out that there are a number of prior works \cite{aflalo2015convex,ling2024exactnessqapsdp} that state exactness under the condition that the eigenvalues of the matrices are all distinct, and where the eigenvectors are not orthogonal to the all ones vector $\bone$.  It turns out that these matrices $A$ and $B$ do {\em not} satisfy these conditions.  In particular, the matrix $A$ has eigenvalues $\pm 1$, each with multiplicity two.  Moreover, the eigenspace of the matrix $A$ corresponding to the eigenvalue $-1$ are vectors of the form $(t_1,-t_1,t_2,-t_2)$, which is orthogonal to $\bone$.

Our discussion here suggests that there are technical difficulties when matrices have repeated eigenvalues or when the eigenvectors are orthogonal to $\bone$.  More specifically, our discussion suggests that the difficulties seem to arise because the dual feasible solution does not exist for such matrices.  In particular, the prior works in \cite{aflalo2015convex,ling2024exactnessqapsdp} rely on an explicit construction of a dual feasible solution to certify exactness, and as we see in this Section, such a strategy may fail in the presence of repeated eigenvalues or if $\bone$ is an eigenvector.  While generic matrices do not satisfy such conditions (say with probability zero under a suitable random model), it is quite possible for adjacency matrices of graphs to satisfy these conditions.

\section{Numerical Experiments}\label{sec:numerics}
% The first experiment is designed to assess the exactness of \eqref{eq:qap-sdp_temporarytag} on randomly generated datasets. In this setup, the matrices $A$ and $B$ are symmetric, with each entry independently drawn from the standard Gaussian distribution. We generate 100 ramdon instances for each size $n$. For each instance, \eqref{eq:qap-sdp_temporarytag} is considered exact if the optimal solution is a permutation matrix.
% \begin{figure}[htbp]
% \centering
% \includegraphics[width=0.5\textwidth]{figures_qapsdp/rate_of_exactness_4_to_9.eps}
% \caption{Correlation between the size of QAP and the rate of exactness on randomly generated datasets. The rate of exactness is defined as the percentage of instances (out of 100) where \eqref{eq:qap-sdp_temporarytag} is exact. For all sizes the rate of exactness is close to 1. The rate is $0.99$ for $n=6.9$, while the rate is $1$ for the rest. }
% \end{figure}

% \begin{table}[htbp]
% \centering
% \begin{tabular}{|c|cccccc|} 
% \hline
% n & $4$ & $5$ & $6$ & $7$ & $8$ & $9$\\
% \hline
% Rate of Exactness (\%) & 100 & 100 & 100 & 100 & 99 & 99 \\ 
% \hline
% \end{tabular}
% \caption{Correlation between the size of QAP and the rate of exactness on randomly generated datasets. The rate of exactness is defined as the percentage of instances (out of 100) where \eqref{eq:qap-sdp_temporarytag} is exact.}
% \end{table}

A core theme in this paper was the strength of the semidefinite relaxation \eqref{eq:qap-sdp}, as well the ability of Theorem \ref{thm:exactness_indeterminate} to certify exactness.  We investigate how these statements hold up through numerical experiments.  In the first part, $A$ and $-B$ are adjacency matrices of graphs, and in the second part, $A$ and $-B$ are distances matrices of point clouds.  In each set of experiments, we track the following metrics: (i) the percentage of instances for which the semidefinite relaxation \eqref{eq:qap-sdp} is exact (among all generated instances), and (ii) the percentage of instances for a certificate from Theorem \ref{thm:exactness_indeterminate} exists (among all instances for which \eqref{eq:qap-sdp} is exact, as in part (i)).  All of our experiments are performed with \texttt{CVX} \cite{cvx2,cvx1} using the MOSEK solver \cite{mosek}.

\subsection{Finding a Certificate}\label{sec:find_certificate}

First we describe our procedure for finding set of vectors $\{\bu^{(ij)}\}_{i, j \in [n]}$ and $\{\bv^{(kl)}\}_{k, l \in [n]}$ satisfying the requirements of Theorem \ref{thm:exactness_indeterminate}.  Note that Requirement 3 of Theorem \ref{thm:exactness_indeterminate} specifies an inequality that holds for all possible permutations -- in principle, one has $n!$ constraints to satisfy.  In the following, we describe a simpler condition that guarantees the existence of the vectors $\{\bu^{(ij)}\}_{i, j \in [n]}$ and $\{\bv^{(kl)}\}_{k, l \in [n]}$.

\begin{proposition}\label{lem:exactness_3_equivalence}
Let $\{\bu^{(ij)}\}_{i, j \in [n]}$ and $\{\bv^{(kl)}\}_{k, l \in [n]}$ be a collection of vectors satisfying Requirements 1 and 2 of Theorem \ref{thm:exactness_indeterminate}.  Suppose in addition that
\begin{equation} \label{eq:new_requirement3}
- \Big( \sum_{i,j} \bu^{(ij)} (\be_i+\be_{j})^T + (\be_{i}+\be_{j}) \bv^{(ij)T} \Big) = -W + \bone \mathbf{p}^T + \mathbf{q} \bone^T,    
\end{equation}
for some $\mathbf{p}, \mathbf{q} \in \bbR^n$, and some non-negative matrix $W \in \bbR^{n \times n}$ with zero diagonal.  Then \eqref{eq:qap-sdp} is exact.
\end{proposition}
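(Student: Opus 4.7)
The plan is to invoke Theorem~\ref{thm:exactness_indeterminate}. Since the hypothesis already gives Requirements 1 and 2 of that theorem, the only thing to verify is Requirement 3.

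First, I would reinterpret Requirement 3 using Lemma~\ref{thm:simplify_T}. Define
\[
M := \sum_{i,j} \bu^{(ij)}(\be_i+\be_j)^T + (\be_i+\be_j)\bv^{(ij)T}.
\]
Combining Lemma~\ref{thm:simplify_T} with Requirement 2 (which identifies the RHS of \eqref{eq:iv_simplified} with $\sum_{i,j} A_{ij} B_{ij}$), Requirement 3 is equivalent to the inequality $\langle X - I, M \rangle \geq 0$ for every permutation matrix $X$, or equivalently (by Birkhoff--von Neumann and linearity in $X$) for every $X \in \mathrm{DS}(n)$.

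Second, I would substitute the assumed decomposition $-M = -W + \bone \mathbf{p}^T + \mathbf{q} \bone^T$ into $\langle I - X, -M\rangle$ and observe that the rank-one ``row'' and ``column'' correction terms drop out for doubly stochastic $X$. Concretely, since $X\bone=\bone$ and $X^T\bone=\bone$, one has $\langle X, \bone \mathbf{p}^T\rangle = \bone^T X \mathbf{p} = \bone^T \mathbf{p} = \langle I, \bone \mathbf{p}^T\rangle$, and analogously $\langle X,\mathbf{q}\bone^T\rangle = \langle I, \mathbf{q}\bone^T\rangle$. Hence $\langle I - X, \bone \mathbf{p}^T + \mathbf{q}\bone^T\rangle = 0$.

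Third, what remains is $\langle I - X, -W \rangle = \langle X, W \rangle - \langle I, W\rangle$. Since $W$ has zero diagonal, $\langle I, W\rangle = 0$; and since both $X\geq 0$ and $W\geq 0$ entrywise, $\langle X, W\rangle \geq 0$. Chaining these identities gives $\langle X - I, M \rangle \geq 0$ for every $X \in \mathrm{DS}(n)$, so Requirement 3 of Theorem~\ref{thm:exactness_indeterminate} is satisfied, and the theorem then yields exactness of \eqref{eq:qap-sdp}.

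The only conceptual step is recognizing that the decomposition hypothesis is precisely tailored so that, after integrating against $I - X$ over $\mathrm{DS}(n)$, the $\bone \mathbf{p}^T$ and $\mathbf{q}\bone^T$ pieces vanish by double stochasticity, leaving behind a non-negative pairing with $W$. No additional estimates are needed; the remaining verification is a routine trace calculation.
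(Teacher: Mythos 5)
Your proof is correct. The core computation --- that pairing $-W + \bone \mathbf{p}^T + \mathbf{q}\bone^T$ against $I - X$ over $\mathrm{DS}(n)$ kills the rank-one pieces by double stochasticity and leaves the non-negative quantity $\langle X, W\rangle$ --- is exactly the content of the paper's Lemma~\ref{lem:normalcone-description}, so the two arguments share the same arithmetic heart. Where you differ is in the logical scaffolding. The paper does not invoke Theorem~\ref{thm:exactness_indeterminate}; it re-runs the argument through Proposition~\ref{thm:tight-closure}, packages the decomposition hypothesis as membership of $-\mathrm{mat}(2\tP\,\mathrm{vec}(I))$ in the normal cone $\mathcal{N}_{\mathrm{DS}(n)}(I)$, and then appeals to convexity of $\bx \mapsto \bx^T\tP\bx$ on $\mathrm{DS}(n)$ together with the first-order optimality criterion to conclude condition (iv). You instead observe that Requirements 1 and 2 are already granted, reduce Requirement 3 via Lemma~\ref{thm:simplify_T} (whose proof shows $\vec(X)^T\tP\vec(X) = 2\langle X, M\rangle$ is \emph{linear} on $\mathrm{DS}(n)$) to the single linear inequality $\langle X - I, M\rangle \geq 0$, and verify it by a direct trace calculation. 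Your route is more elementary: because the restricted objective is linear, the convexity and first-order-optimality machinery is not actually needed, and Birkhoff--von Neumann plus the trace identities suffice. The paper's normal-cone phrasing buys a reusable geometric statement (Lemma~\ref{lem:normalcone-description}) and makes transparent why the condition \eqref{eq:new_requirement3} is the natural first-order certificate at $I$, but as a proof of this proposition your version is shorter and equally rigorous. The only point to make explicit if you write this up is that the equivalence between the permutation-indexed inequality \eqref{eq:iv_simplified} and $\langle X - I, M\rangle \geq 0$ is established inside the proof of Lemma~\ref{thm:simplify_T} rather than in its statement, so you should cite that computation rather than the lemma's statement alone.
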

In our numerical experiments, we search for vectors $\bu$ and $\bv$ satisfying the first two Requirements of Theorem \ref{thm:exactness_indeterminate} in addition to \eqref{eq:new_requirement3}.
%Given an instance of $A$ and $B$, we find the certificate by solving a feasibility problem (with objective zero) where the conditions in Proposition \ref{lem:exactness_3_equivalence} serve as the constraints, and claim the certificate exists if the optimal value is zero. 

%\begin{lemma}\label{lem:exactness_3_equivalence}
%Condition 3 in Theorem \ref{thm:exactness_indeterminate} holds if
%$$
%- \left( \sum_{i,j} \bu^{(ij)} (\be_i+\be_{j})^T + (\be_{i}+\be_{j}) \bv^{(ij)T} \right) = -W + \bone \mathbf{p}^T + \mathbf{q} \bone^T \in \bbR^{n \times n},
%$$
%for some $\mathbf{p}, \mathbf{q} \in \bbR^n$, and some non-negative matrix $W$ with zero diagonal.
%\end{lemma}

The proof of Lemma \ref{lem:exactness_3_equivalence} can be broken down into the following steps.  First, we recall the definition of the normal cone of a closed convex set $\mathcal{C}$ at $x \in \mathcal{C}$:
$$
\cN_{\mathcal{C}}(x) = \{z:\langle z, y-x \rangle \le 0 \text{ for all }y \in \mathcal{C} \}.
$$

In our context, the set $\mathcal{C}$ of interest is the set of all doubly stochastic matrices $\mathrm{DS}(n)$.

\begin{lemma}\label{lem:normalcone-description}
Suppose $M = -W + \bone \mathbf{p}^T + \mathbf{q} \bone^T \in \mathbb{R}^{n \times n}$, where $W$ is a non-negative matrix with zero diagonal, and  $\mathbf{p}, \mathbf{q} \in \bbR^n$.  Then $M \in \cN_{\mathrm{DS}(n)}(I)$. 
\end{lemma}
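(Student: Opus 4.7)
The plan is to verify the normal cone condition directly from the definition, namely to show that $\langle M, X - I \rangle \leq 0$ for every $X \in \mathrm{DS}(n)$. I will split $M$ into its three additive pieces $-W$, $\bone \mathbf{p}^T$, and $\mathbf{q} \bone^T$, and handle each separately.

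First I would observe that the two rank-one pieces $\bone \mathbf{p}^T$ and $\mathbf{q} \bone^T$ contribute nothing. For instance, using cyclicity of the trace and the doubly stochastic identity $X^T \bone = \bone$, one has $\langle \bone \mathbf{p}^T, X \rangle = \mathbf{p}^T X^T \bone = \mathbf{p}^T \bone = \langle \bone \mathbf{p}^T, I \rangle$, so the pairing against $X - I$ vanishes. The same argument using $X \bone = \bone$ handles the $\mathbf{q} \bone^T$ term. This reduces the problem to showing that $\langle -W, X - I \rangle \leq 0$, or equivalently $\langle W, X \rangle \geq \langle W, I \rangle$.

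The right-hand side equals $\sum_i W_{ii}$, which is zero because $W$ has zero diagonal. The left-hand side is $\sum_{i,j} W_{ij} X_{ij} \geq 0$ because both $W$ and $X$ are entrywise non-negative. Hence $\langle W, X - I \rangle \geq 0$ as required, completing the argument.

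I do not expect any obstacle here; the result is essentially a direct verification once the decomposition of $M$ is exploited, and the only content is that the two rank-one boundary pieces pair trivially against the difference $X - I$ for doubly stochastic $X$ while the non-negative off-diagonal part $-W$ attains its maximum over $\mathrm{DS}(n)$ at the identity. The step that is worth stating explicitly, for use later in deriving Proposition \ref{lem:exactness_3_equivalence}, is that condition \eqref{eq:new_requirement3} is precisely the statement that the ``sufficient gradient'' coming from the construction of $\tP$ lies in $\mathcal{N}_{\mathrm{DS}(n)}(I)$, which is how this lemma will feed into the subsequent reduction of Requirement 3.
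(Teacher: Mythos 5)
Your proof is correct and follows essentially the same route as the paper: decompose $M$ into $-W$, $\bone\mathbf{p}^T$, and $\mathbf{q}\bone^T$, show the two rank-one pieces pair to zero against $X-I$ using the row/column-sum identities, and use non-negativity of $W$ and $X$ together with the zero diagonal of $W$ for the remaining piece. The only (cosmetic) difference is that the paper first reduces to permutation matrices via Birkhoff--von Neumann before running the identical computation, whereas you verify the inequality directly over all of $\mathrm{DS}(n)$, which works just as well.
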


%\textcolor{blue}{Second, we provide an exact description of the normal cone of Birkhoff polytope.}
%\begin{lemma}\label{lem:normalcone-description}
%    $M \in \cN_{\mathrm{DS}(n)}(I)$ if and only if $M$ can be written as
%    $$
%    M = W + \bone \mathbf{p}^T + \mathbf{q} \bone^T, 
%    $$
%    for some $\mathbf{p}, \mathbf{q} \in \bbR^n$ and $W \in \bbR^{n \times n}$, where $W \le 0$ and $W_{ii} = 0$ for all $i \in [n]$. 
%\end{lemma}

\begin{proof}
Recall from the Birkhoff-von Neumann theorem that the extreme points of the $\mathrm{DS}(n)$ are the permutation matrices.  Hence to show that $M \in \cN_{\mathrm{DS}(n)}(I)$, it suffices to show that $\langle M, \Pi - I \rangle = \mathrm{tr}(M^T(\Pi - I)) \leq 0$ for all permutation matrices $\Pi$.

Suppose $M = \bone \mathbf{p}^T$.  Then $\mathrm{tr}(M^T(\Pi-I)) = \mathrm{tr}(\mathbf{p}\bone^T(\Pi - I)) = \mathrm{tr}(\mathbf{p}\bone^T - \mathbf{p}\bone^T) = 0$.  The same is true if $M = \mathbf{q} \bone^T$.  

Finally suppose $M = -W$.  Then $\mathrm{tr}(MI) = -\mathrm{tr}(W) = 0$ since $W$ has zero diagonals.  However, $\mathrm{tr}(M\Pi) = -\mathrm{tr}(W\Pi) \leq 0$ because the entries of $W$ are non-negative.  Hence $\langle M, \Pi - I \rangle \leq 0$.

The result follows by taking the conic combination of these matrices.
\end{proof}

Suppose $f: \bbR^n \rightarrow \bbR$ is a differentiable convex function over a closed convex set $\mathcal{C}$.  Then from first-order optimality one has
$$x^\star \in \arg \min_{x \in \mathcal{C}} f(x) \quad \Leftrightarrow \quad -\nabla f(x^\star) \in \mathcal{N}_{\mathcal{C}}(x^\star).$$
We use this result to prove Proposition \ref{lem:exactness_3_equivalence}.

\begin{proof}[Proof of Proposition \ref{lem:exactness_3_equivalence}]
The proof is similar to the arguments in the proof of Theorem \ref{thm:exactness_indeterminate}.  As before, we define $\tP$ as in \eqref{eq:construction_of_P}, and apply Proposition \ref{thm:tight-closure}.

First, Requirements (i), (ii), and (iii) of Proposition \ref{thm:tight-closure} are satisfied for precisely the same reasons.  It remains to verify Requirement (iv).  Consider the function 
$$
\bx \mapsto \bx^T \tP \bx \quad \bx = \mathrm{vec}(X), X \in \mathrm{DS}(n).
$$
Since $\tP \in \mathcal{S}$, the function is convex over $\mathrm{DS}(n)$.  The derivative of this function is $2 \tP \bx$.  In particular, the (unvectorized) derivative when evaluated at $\bx = \mathrm{vec}(I)$ is
$$ 
\mathrm{mat}(2 \tP \bx) = 2\sum_{i,j} \bu^{(ij)} (\be_i+\be_{j})^T + 2(\be_{i}+\be_{j}) \bv^{(ij)T}. 
$$
By combining the assumption and Lemma \ref{lem:normalcone-description}, we have $-\mathrm{mat}(2 \tP \bx) \in \mathcal{N}_{\mathrm{DS}(n)}(I)$, from which we conclude our result.
\end{proof}

\subsection{Numerical Experiment on Adjacency Matrices of Graphs}

In this experiment, the matrices $A$ and $-B$ are adjacency matrices of graphs.  Our numerical experiments can be viewed as direct implementations of the graph matching problem.  Part of the purpose of this experiment is to investigate the efficacy of \eqref{eq:qap-sdp} for graph matching instances, at least over small dimensions.

More specifically, we take $A$ and $-B$ to be all possible pairs of (adjacency matrices of) non-isomorphic graphs over $n$ vertices, where $n \in \{3,4,5,6\}$.  We solve the associated QAP instances by testing all possible $n!$ permutations, and we declare exactness if the QAP objective matches that of the semidefinite relaxation \eqref{eq:qap-sdp}.  
% We say that a certificate for Theorem \ref{thm:exactness_indeterminate} exists if there exists a collection of vectors $\{\bu^{(ij)}\}_{i, j \in [n]}$ and $\{\bv^{(kl)}\}_{k, l \in [n]}$ satisfying the conditions specified in Theorem \ref{thm:exactness_indeterminate}.  
For an instance where the relaxation \eqref{eq:qap-sdp} is exact, we declare that a certificate from Theorem \ref{thm:exactness_indeterminate} exists if the procedure outlined in Section \ref{sec:find_certificate} succeeds at finding $\{\bu^{(ij)}\}$ and $\{\bv^{(kl)}\}$.

We present the result of our experiments in Table \ref{tab:percentage_exactness_graph}.

In Appendix \ref{append:graph}, we detail the results for $n=5$: In Figure \ref{fig:graphs_n5} we list all possible non-isomorphic graphs over five vertices, and we present the optimal values of \eqref{eq:qap-sdp} for every pair of matrices in Table \ref{tab:graph_n5_opt_val}.

\begin{table}[htbp]
    \centering
    \begin{tabular}{|c||cccc|}
    \hline
        n & 3 & 4 & 5 & 6 \\
    \hline
        $\#$ of non-isomorphic graphs & 3 & 11 & 34 & 154 \\
    % \hline
        $\#$ of graphs pairs & 6 & 66 & 595 & 12,246 \\ 
    % \hline
        $\#$ of exact graph pairs & 6 & 66 & 595 & 11,572 \\
        $\#$ of graph pairs with certificates & 6 & 66 & 595 & 11,480 \\
    % \hline
        Percentage of exactness ($\%$) & 100 & 100 & 100 & 94.5 \\
        Percentage of graph pairs with certificates ($\%$) & 100 & 100 & 100 & 99.2 \\
    \hline
    \end{tabular}
    \caption{Frequency of exactness of \eqref{eq:qap-sdp} on graph matching and the percentage of instances for a certificate from Theorem \ref{thm:exactness_indeterminate} exists. The matrices $A$ and $-B$ are adjacency matrices of graphs. }
    \label{tab:percentage_exactness_graph}
\end{table}

Our results indicate that \eqref{eq:qap-sdp} is exact for graph matching tasks involving up to $5$ vertices.  In addition, a certificate of exactness via Theorem \ref{thm:exactness_indeterminate} exists for a large proportion of these instances.  For $n=6$ vertices, there are pairs of matrices for which the semidefinite relaxation \eqref{eq:qap-sdp} is not exact.  Here is one such example.  Take:
$$
A = \left( \begin{array}{cccccc}
    0 & 1 & 0 & 0 & 0 & 0 \\
    1 & 0 & 0 & 0 & 0 & 0 \\
    0 & 0 & 0 & 1 & 0 & 0 \\
    0 & 0 & 1 & 0 & 0 & 0 \\
    0 & 0 & 0 & 0 & 0 & 1 \\
    0 & 0 & 0 & 0 & 1 & 0
\end{array}\right), \quad
B = - \left( \begin{array}{cccccc}
    0 & 1 & 1 & 0 & 0 & 0 \\
    1 & 0 & 1 & 0 & 0 & 0 \\
    1 & 1 & 0 & 0 & 0 & 0 \\
    0 & 0 & 0 & 0 & 0 & 0 \\
    0 & 0 & 0 & 0 & 0 & 0 \\
    0 & 0 & 0 & 0 & 0 & 0
\end{array}\right). 
$$
The adjacency matrices $A$ and $-B$ in this example correspond to the graphs pair in Figure \ref{fig:inexact_example}. For every possible mapping between the vertices of these two graphs, the number of overlapping edges is either zero or one. Therefore, the minimum of \eqref{eq:qap} is achieved for any mapping where the number of overlapping edges is exactly one, which results in an optimal value of $-4$.  To see why \eqref{eq:qap-sdp} is not exact, let $\hat{X} = E/6$, and define $\hat{\tX}$ as the matrix below:
$$
\hat{\tX} = \frac{1}{48} \left( \begin{array}{cccccc}
    \hat{Y}_1 & \hat{Y}_2 & \hat{Y}_2 & \hat{Y}_3 & \hat{Y}_3 & \hat{Y}_3 \\
    \hat{Y}_2 & \hat{Y}_1 & \hat{Y}_2 & \hat{Y}_3 & \hat{Y}_3 & \hat{Y}_3 \\
    \hat{Y}_2 & \hat{Y}_2 & \hat{Y}_1 & \hat{Y}_3 & \hat{Y}_3 & \hat{Y}_3 \\
    \hat{Y}_3 & \hat{Y}_3 & \hat{Y}_3 & \hat{Y}_1 & \hat{Y}_2 & \hat{Y}_2 \\
    \hat{Y}_3 & \hat{Y}_3 & \hat{Y}_3 & \hat{Y}_2 & \hat{Y}_1 & \hat{Y}_2 \\
    \hat{Y}_3 & \hat{Y}_3 & \hat{Y}_3 & \hat{Y}_2 & \hat{Y}_2 & \hat{Y}_1 \\
\end{array} \right),
$$
and
$$
\hat{Y}_1 = 8I, 
\hat{Y}_2 = \left( \begin{array}{cccccc}
    0 & 4 & 1 & 1 & 1 & 1 \\
    4 & 0 & 1 & 1 & 1 & 1 \\
    1 & 1 & 0 & 4 & 1 & 1 \\
    1 & 1 & 4 & 0 & 1 & 1 \\
    1 & 1 & 1 & 1 & 0 & 4 \\
    1 & 1 & 1 & 1 & 4 & 0 \\
\end{array} \right), 
\hat{Y}_3 = \left( \begin{array}{cccccc}
    0 & 0 & 2 & 2 & 2 & 2 \\
    0 & 0 & 2 & 2 & 2 & 2 \\
    2 & 2 & 0 & 0 & 2 & 2 \\
    2 & 2 & 0 & 0 & 2 & 2 \\
    2 & 2 & 2 & 2 & 0 & 0 \\
    2 & 2 & 2 & 2 & 0 & 0 \\
\end{array} \right). 
$$
It can be verified that the pair $(\hat{X},\hat{\tX})$ is a feasible solution to \eqref{eq:qap-sdp}. However, the value of $\langle A \otimes B, \hat{\tX} \rangle$ is $-8$, which is strictly smaller than the true optimal value. Consequently, the optimal value of \eqref{eq:qap-sdp} is strictly smaller than the true optimal value, which tells us that the semidefinite relaxation is not exact in this instance.

\begin{figure}[htbp]
    \centering
    \includegraphics[width=0.5\linewidth]{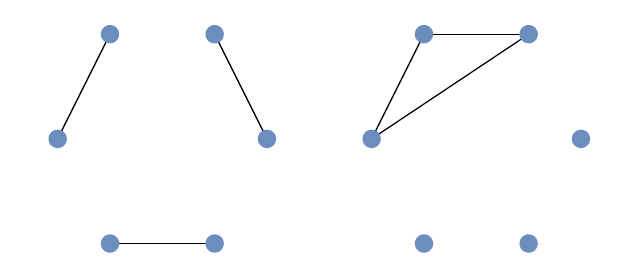}
    \caption{Graphs correspond to $A$ (left) and $-B$ (right). }
    \label{fig:inexact_example}
\end{figure}

Through this numerical experiment, we found an instance of a pair of matrices $A$ and $B$ of dimension $6$ where the semidefinite relaxation \eqref{eq:qap-sdp}.  It leaves the question -- what is the largest dimension for which the relaxation is always exact?  Theorem \ref{thm:exactness_n3} tells us this is at least $3$.  It would be interesting to work out what the exact number is.

% We further analyze the number of graph pairs for which \eqref{eq:qap-sdp} is exact to determine how many instances can be certified using Theorem \ref{thm:exactness_indeterminate}. Specifically, given adjacency matrices of a graph pair, we check if there exists a collection of vectors ${\bu^{(kl)}}{k \neq l}$ and ${\bv^{(ij)}}{i \neq j} \subset \mathbb{R}^{n}$ that satisfy the conditions specified in the theorem. These experiments are conducted using \texttt{CVX} \cite{cvx1,cvx2}, and the results are presented in Table \ref{tab:certification_graph}.

% \begin{table}[h]
%     \centering
%     \begin{tabular}{|c||cccc|}
%     \hline
%         n & 3 & 4 & 5 & 6 \\
%     \hline
%         $\#$ of exact graph pairs & 6 & 66 & 595 & 11,572 \\
%         $\#$ of certified graph pairs & 6 & 66 & 595 & 11,480 \\
%         Percentage of certified graph pairs ($\%$) & 100 & 100 & 100 & 99.2 \\
%     \hline
%     \end{tabular}
%     \caption{Percentage of instances of graphs where \eqref{eq:qap-sdp} is exact and can be certified by Theorem \ref{thm:exactness_indeterminate}}. 
%     \label{tab:certification_graph}
% \end{table}

\subsection{Numerical Experiment on Distances Matrices}  

In this experiment, the matrices $A$ and $-B$ are the distances matrices of a pair of point clouds.  The matrices are generated as follows.  First, we generate two sets of samples points of size $n$ each whereby $\mathbf{S}_c$ are $n$ i.i.d. draws from the Gaussian distribution $\mathcal{N}(0,I_3)$, while $\mathbf{S}_d$ is a different set of $n$ i.i.d. draws from the distribution $\mathcal{N}(\mu_d ,I_3)$, $\mu = [4,4,4]^T$.  See Figure \ref{fig:samples} for an illustration of the generated sets of sample points.

\begin{figure}[htbp]
    \centering
\includegraphics[width=0.5\linewidth]{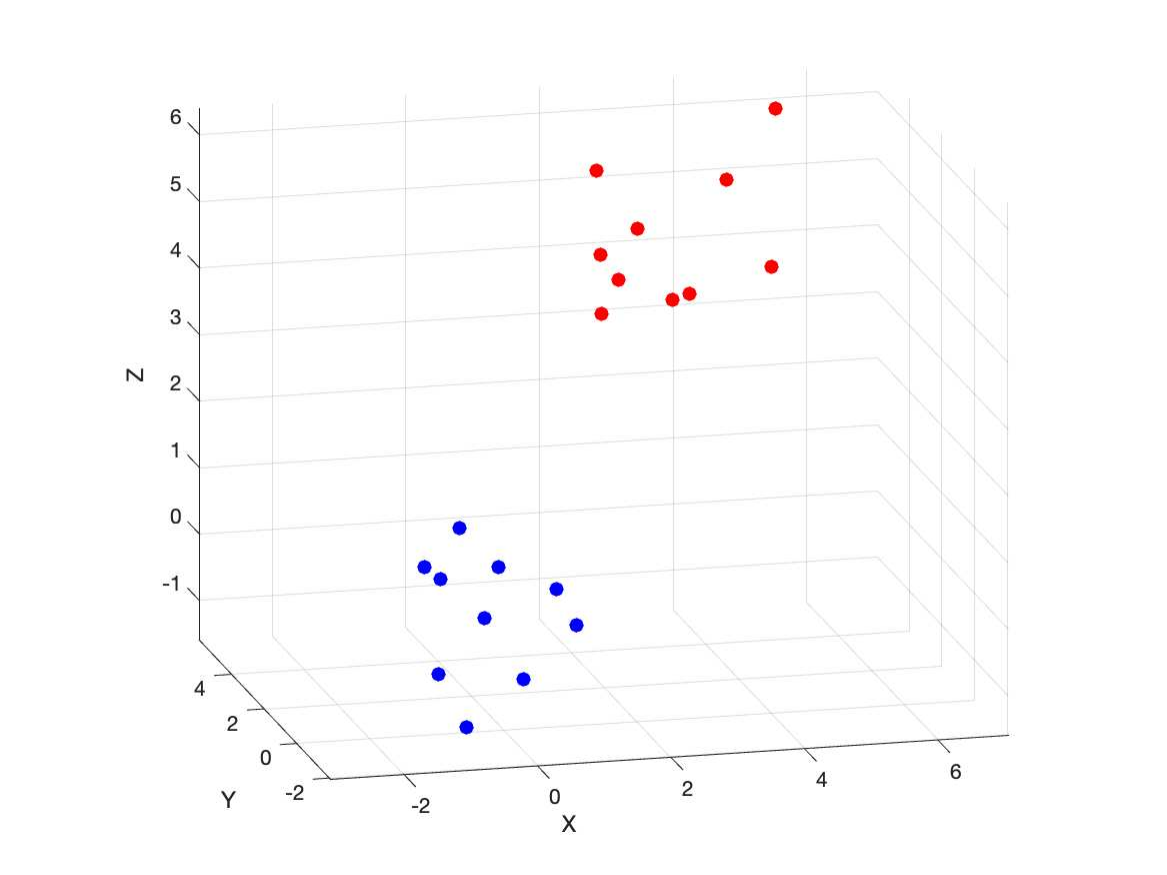}
    \caption{Illustration of the generated sets of sample points $\mathbf{S}_c$ (in blue) and $\mathbf{S}_d$ (in red) for $n=10$. }
    \label{fig:samples}
\end{figure}

Next, we compute the matrix $C \in \mathbb{R}^{n \times n}$ where $C_{ij}$ is the Euclidean distance between the $i$-th and $j$-th points in $\mathbf{S}_c$.  The matrix $D$ is similarly computed.  Last, we set $A = C$ and $B = -D$.  For each value of $n \in \{3,\ldots,10\}$, we perform $100$ random trials based on the above procedure.  We declare that the semidefinite relaxation is exact if the optimal solution to \eqref{eq:qap-sdp} is a permutation matrix.  We say that a certificate from Theorem \ref{thm:exactness_indeterminate} exists if the procedure outlined in Section \ref{sec:find_certificate} succeeds at finding $\{\bu^{(ij)}\}$ and $\{\bv^{(kl)}\}$ satisfying the conditions specified in Theorem \ref{thm:exactness_indeterminate}.  

We present the results in Table \ref{tab:certification_distance}. 
\begin{table}[htbp]
    \centering
    \begin{tabular}{|c||cccccccc|}
    \hline
       n & 3 & 4 & 5 & 6 & 7 & 8 & 9 & 10 \\
       \hline
       Percentage of exact pairs ($\%$) & 100 & 100 & 100 & 100 & 100 & 100 & 100 & 99\\
       Percentage of pairs with certificates ($\%$) & 100 & 100 & 100 & 98 & 99 & 93 & 96 & 91 \\
       \hline
    \end{tabular}
    \caption{The percentage of instances where \eqref{eq:qap-sdp} is exact and the percentage of instances for a certificate from Theorem \ref{thm:exactness_indeterminate} exists. The matrices $A$ and $-B$ are generated from random distance matrices. }
    \label{tab:certification_distance}
\end{table}

Our results show that a large proportion of the simulated instances have a certificate from Theorem \ref{thm:exactness_indeterminate}.  We do note that the proportion of instances for which there is a certificate drops as we increase $n$.  It is not so clear how one compares QAP instances from different dimensions, but a plausible explanation as to why the percentage of exactness as well as instances where a certificate for Theorem \ref{thm:exactness_indeterminate} drops as we increase $n$ is that -- at least at an intuitive level -- it becomes increasingly more difficult to obtain a permutation as an optimal solution because we need a larger number of variables to be integral.

We also note that the percentage of instances for which the relaxation \eqref{eq:qap-sdp} is exact is substantially higher with distance matrices.  This suggests the possibility that the relaxation \eqref{eq:qap-sdp} may perform better on {\em structured} data $A$ and $B$, such as those arising from actual distances or metrics, as opposed to completely generic matrices $A$ and $B$.  %It would be interesting to understand the efficacy of \eqref{eq:qap-sdp} in instances where $A$ and $B$ have certain structure; e.g., they satisfy some form of a triangle inequality.

%\vspace{10pt}
%\noindent\textbf{Acknowledgements} are not compulsory. Where included they should be brief. Grant or contribution numbers may be acknowledged. Please refer to Journal-level guidance for any specific requirements.

% \section*{Declarations}

% Some journals require declarations to be submitted in a standardised format. Please check the Instructions for Authors of the journal to which you are submitting to see if you need to complete this section. If yes, your manuscript must contain the following sections under the heading `Declarations':

% \begin{itemize}
% \item Funding
% \item Conflict of interest/Competing interests (check journal-specific guidelines for which heading to use)
% \item Ethics approval and consent to participate
% \item Consent for publication
% \item Data availability 
% \item Materials availability
% \item Code availability 
% \item Author contribution
% \end{itemize}

% \bibliographystyle{alphaabbr}
\bibliography{biblio}

\appendix

\newpage

\section{Redundancy of the Arrow Condition}\label{append:redundancy}

In this section we prove the redundancy of the arrow condition in \eqref{eq:qap-sdp}.  The key step is to show that the constraints in \eqref{eq:qap-sdp} imply the following set of constraints:
\begin{equation}\label{eq:mariginal-redundant}
\sum_{i} \tX_{(i,j),(k,l)} = X_{kl} \text{ and } \sum_{i} \tX_{(j,i),(k,l)} = X_{kl} \text{ for all $j,k,l \in [n]$}. 
\end{equation}
Then one has $\sum_i \tX_{(i,j),(i,l)} = X_{il}$.  Recall that $\tX_{(i,j),(i,l)} = 0$ if $j \neq l$.  By combining these conclusions, one has $\tX_{(i,l),(i,l)} = X_{il}$, which implies that the arrow condition is redundant.

We proceed to show why \eqref{eq:mariginal-redundant} holds.  Given a quadratic function in $X$ of the form
$$
g(X) = \mathrm{vec}(X)^T \tA \mathrm{vec}(X) + \langle X, B \rangle + c,
$$
we define the linearization $\bar{g}$ obtained by replacing all quadratic terms with terms in $\tX$:
$$
\bar{g} (X) = \langle \tA, \tX \rangle + \langle X, B \rangle + c,
$$
%For a quadratic function $g(X):\bbR^{n \times n} \rightarrow \bbR$, we denote by $\Bar{g}(X,\tX):\bbR^{n \times n} \times \bbR^{n^2 \times n^2} \rightarrow \bbR$ its linearization, which is obtained by lifting the quadratic terms $X_{ij}X_{kl}$ to $\tX_{(i,j),(k,l)}$. We have the following result for the linearization of quadratic functions. 

The following result is based on Proposition 4 in \cite{dym2017exact}.  Specifically, it is based on a restatement of the result in \cite{dym2017exact} that can be found in the Appendix of \cite{ds++}.  In the following, we state the result specialized to our setting.

\begin{proposition}[Proposition 4, \cite{dym2017exact}]\label{lem:quadratic-lin-ds++}
Let $p = \langle A_p, X \rangle + b_p$ be an affine function in $X$, and let $g = p^2$.  Suppose $\bar{g}(X,\tX) = 0$ for all $(X,\tX)$ feasible in \eqref{eq:qap-sdp}.  Then $\bar{f}(X,\tX) = 0$ for all quadratic functions $f$ of the form $f=pq$, where $q = \langle A_q , X \rangle + b_q$ is any affine function in $X$.
%Suppose a quadratic function $g$ is of the form $g = p^2$. If $\bar{g}(X,\tX) = 0$ for all feasible $(X,\tX)$, then for any quadratic $f$ of the form $f = pq$, we have $\bar{f}(X,\tX) = 0$. 
\end{proposition}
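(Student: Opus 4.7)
The plan is to leverage the PSD lifting constraint in \eqref{eq:qap-sdp} through a standard rank-deficiency argument for positive semidefinite matrices. To each affine function $r(X) = \langle A_r, X\rangle + b_r$, I would associate the augmented vector $v_r := (\vec(A_r);\, b_r) \in \R^{n^2+1}$, and set
$$
M(X,\tX) \ := \ \begin{pmatrix} \tX & \vec(X) \\ \vec(X)^T & 1 \end{pmatrix},
$$
which is PSD whenever $(X,\tX)$ is feasible for \eqref{eq:qap-sdp}.

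The first step is to verify the identity
$$
\overline{pq}(X, \tX) \ = \ v_p^T\, M(X,\tX)\, v_q
$$
for any two affine functions $p, q$. This is a direct expansion: the product $pq(X)$ produces the quadratic term $\vec(A_p)^T \vec(X)\vec(X)^T \vec(A_q)$, the cross terms $b_q \vec(A_p)^T \vec(X)$ and $b_p \vec(A_q)^T \vec(X)$, and the constant $b_p b_q$; linearizing via the substitution $\vec(X)\vec(X)^T \mapsto \tX$ assembles these pieces into $v_p^T M v_q$. Setting $q=p$ yields, in particular, $\bar{g}(X,\tX) = v_p^T M(X,\tX) v_p$.

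With this identity in hand, the remainder is routine. For any feasible $(X,\tX)$, the hypothesis forces $v_p^T M v_p = 0$; since $M \succeq 0$, the standard fact (factor $M = L^T L$, so that $\|L v_p\|^2 = 0$ implies $L v_p = 0$ and therefore $M v_p = 0$) yields $M v_p = 0$. Hence for any affine $q$,
$$
\bar{f}(X,\tX) \ = \ v_p^T M v_q \ = \ v_q^T (M v_p) \ = \ 0.
$$
Since $(X,\tX)$ was an arbitrary feasible point, this completes the argument. I do not anticipate any serious obstacle: the only non-trivial bookkeeping is confirming that the linearization rule in \eqref{eq:qap-sdp} (substituting $\tX_{(i,j),(k,l)}$ for $X_{ij}X_{kl}$) is consistent with the rank-one substitution $\vec(X)\vec(X)^T \mapsto \tX$, which is immediate from the definitions.
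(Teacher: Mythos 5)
Your argument is correct and complete. Note that the paper does not actually prove this proposition: it is imported verbatim from \cite{dym2017exact} (via the restatement in \cite{ds++}), so there is no in-paper proof to compare against. Your route --- writing $\bar{g}(X,\tX) = v_p^T M(X,\tX)\, v_p$ and $\bar{f}(X,\tX) = v_p^T M(X,\tX)\, v_q$ with $M(X,\tX)$ the lifted PSD block matrix, then using $M \succeq 0$ together with $v_p^T M v_p = 0$ to conclude $M v_p = 0$ and hence $v_p^T M v_q = 0$ --- is the standard argument behind this kind of linearization lemma, and it uses only the PSD constraint of \eqref{eq:qap-sdp}, which is all that is needed. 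The one piece of bookkeeping worth making explicit is that the quadratic part of $f = pq$ is represented by the (generally non-symmetric) matrix $\vec(A_p)\vec(A_q)^T$, so the identity $\langle \vec(A_p)\vec(A_q)^T, \tX\rangle = \vec(A_p)^T \tX\, \vec(A_q)$ and its agreement with the symmetrized representative both rely on the symmetry of $\tX$; since $\tX$ is a principal block of the symmetric matrix $M(X,\tX)$, this is automatic, and your proof stands as written.
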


\begin{proposition} \label{thm:marginalsums}
The constraints in \eqref{eq:qap-sdp} imply \eqref{eq:mariginal-redundant}.
\end{proposition}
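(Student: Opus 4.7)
The plan is to invoke Proposition \ref{lem:quadratic-lin-ds++} twice, once for each identity in \eqref{eq:mariginal-redundant}. The key is to exhibit an affine function $p(X)$ whose square linearizes to zero under the constraints of \eqref{eq:qap-sdp}, and then apply the proposition with $q(X) = X_{kl}$ to transport this vanishing statement to the desired identity.

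For the first identity I would fix $j \in [n]$ and take $p(X) := \sum_{i} X_{ij} - 1$, which vanishes identically on $\mathrm{DS}(n)$. Expanding
$$
p(X)^2 \;=\; \sum_{i,i'} X_{ij} X_{i'j} \;-\; 2 \sum_i X_{ij} \;+\; 1,
$$
and replacing each quadratic monomial $X_{ij} X_{i'j}$ by $\tX_{(i,j),(i',j)}$ gives the linearization
$$
\sum_{i,i'} \tX_{(i,j),(i',j)} \;-\; 2 \sum_i X_{ij} \;+\; 1.
$$
The first sum equals $1$ by the constraint $\sum_{i,i'} \tX_{(i,k),(i',l)} = 1$ of \eqref{eq:qap-sdp} taken at $k=l=j$, while $\sum_i X_{ij} = 1$ because $X$ is doubly stochastic. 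Hence the linearization of $p^2$ vanishes on every feasible tuple. Proposition \ref{lem:quadratic-lin-ds++} applied with the affine function $q(X) = X_{kl}$ then forces the linearization of $p(X)q(X) = \sum_i X_{ij} X_{kl} - X_{kl}$ to vanish as well, i.e.\ $\sum_i \tX_{(i,j),(k,l)} = X_{kl}$, which is precisely the first identity of \eqref{eq:mariginal-redundant}.

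The second identity follows verbatim from the same argument with $p(X) := \sum_{i} X_{ji} - 1$, this time using the other sum constraint $\sum_{i,i'} \tX_{(k,i),(l,i')} = 1$ of \eqref{eq:qap-sdp} at $k=l=j$, together with $\sum_i X_{ji} = 1$.

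I do not expect any real obstacle here: the affine function $p$ has been engineered so that the quadratic, linear, and constant parts of the linearization of $p^2$ are individually pinned down to $1$, $-2$, and $+1$ by the sum constraints of \eqref{eq:qap-sdp} and double stochasticity, after which the conclusion reduces to a one-line substitution into Proposition \ref{lem:quadratic-lin-ds++}. With \eqref{eq:mariginal-redundant} in hand, specializing to $k=i$, $l$ arbitrary and using the Gangster conditions $\tX_{(i,j),(i,l)} = 0$ for $j \neq l$ recovers $\tX_{(i,l),(i,l)} = X_{il}$, as explained in the paragraph preceding the proposition.
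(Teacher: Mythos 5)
Your proof is correct and follows essentially the same route as the paper: linearize $\bigl(\sum_i X_{ij}-1\bigr)^2$ and $\bigl(\sum_i X_{ji}-1\bigr)^2$, show these linearizations vanish on the feasible set, and invoke Proposition \ref{lem:quadratic-lin-ds++} with $q(X)=X_{kl}$ to obtain \eqref{eq:mariginal-redundant}. The only cosmetic difference is that you read off $\sum_{i,i'}\tX_{(i,j),(i',j)}=1$ directly from the marginal-sum constraint at $k=l=j$, whereas the paper first collapses the double sum to its diagonal via the Gangster conditions; both are valid.
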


\begin{proof}[Proof of Proposition \ref{thm:marginalsums}]
Define
$$
L_j(X) := \Big(\sum_{i}X_{ij}-1\Big)^2 \text{ and } R_j(X) := \Big(\sum_{i}X_{ji}-1\Big)^2. 
$$
Then one has
$$
\bar{L}_j(X,\tX) = \sum_{i,k} \tX_{(i,j),(k,j)} - 2 \sum_{i} X_{ij} + 1. 
$$
Let $(X,\tX)$ be feasible in \eqref{eq:qap-sdp}.  Then one has $\sum_{i}X_{ij} = 1$ and $\sum_{i,k} \tX_{(i,j),(k,j)} \overset{(a)}{=} \sum_{i} \tX_{(i,j),(i,j)} \overset{(b)}{=} 1$, where (a) follows from the Gangster operator and (b) follows from the constraint in the second row of \eqref{eq:qap-sdp}. This implies $\bar{L}_j(X,\tX) = 0$ for all $j$.  By a similar set of arguments, one also has $\bar{R}_j(X,\tX) = 0$.

We are now in a position to apply Proposition \ref{lem:quadratic-lin-ds++} with $p = \sum_{i} X_{ij}-1$ and $p = \sum_{i} X_{ji}-1$.  Consider
$$
l_{j,k,l}(X) := \Big(\sum_{i}X_{ij}-1\Big)X_{kl} \text{ and } r_{j,k,l}(X) := \Big(\sum_{i}X_{ji}-1\Big)X_{kl}.
$$
By Proposition \ref{lem:quadratic-lin-ds++}, we have $\bar{l}_{j,k,l}(X,\tX) = 0$ and $\bar{r}_{j,k,l}(X,\tX) = 0$ for all feasible $(X,\tX)$ and all $j,k,l \in [n]$.  This is precisely \eqref{eq:mariginal-redundant}.
\end{proof}

%are quadratic functions obtained by multiplying a linear function by a monomial. Motivated by Lemma \ref{lem:quadratic-lin-ds++}, it is natural to define the following functions $L_j,R_j: \bbR^{n \times n} \rightarrow \bbR$:

\if0
The following lemma is a direct application of Lemma \ref{lem:quadratic-lin-ds++}. 
\begin{lemma}\label{lem:redundancy-arrow}
Suppose $\bar{L}_j(X,\tX) = 0$ for all $j\in [n]$ and all feasible pairs $(X,\tX)$ of \eqref{eq:qap-sdp}. Then $\bar{l}_{j,k,l}(X,\tX) = 0$ for all $j,k,l \in [n]$. Similarly, suppose $\bar{R}_j(X,\tX) = 0$ for all $j \in [n]$ and all feasible pairs $(X,\tX)$ of \eqref{eq:qap-sdp}. Then $\bar{r}_{j,k,l}(X,\tX) = 0$ for all $j,k,l \in [n]$. 
\end{lemma}

\begin{lemma} \label{thm:arrowredundant}
The constraints in the feasible set of \eqref{eq:qap-sdp} implies \eqref{eq:mariginal-redundant}. 
\end{lemma}
\begin{proof}
By Lemma \ref{lem:redundancy-arrow}, it suffices to that for all feasible pairs $(X,\tX)$ of \eqref{eq:qap-sdp}, one has $\bar{L}_j(X,\tX) = 0$ and $\bar{R}_j(X,\tX) = 0$ for all $j\in [n]$.  
\end{proof} 
\fi

\newpage

\section{Supplementary Material}\label{append:graph}

\begin{figure}[htbp]
    \centering
    \includegraphics[width=0.8\linewidth]{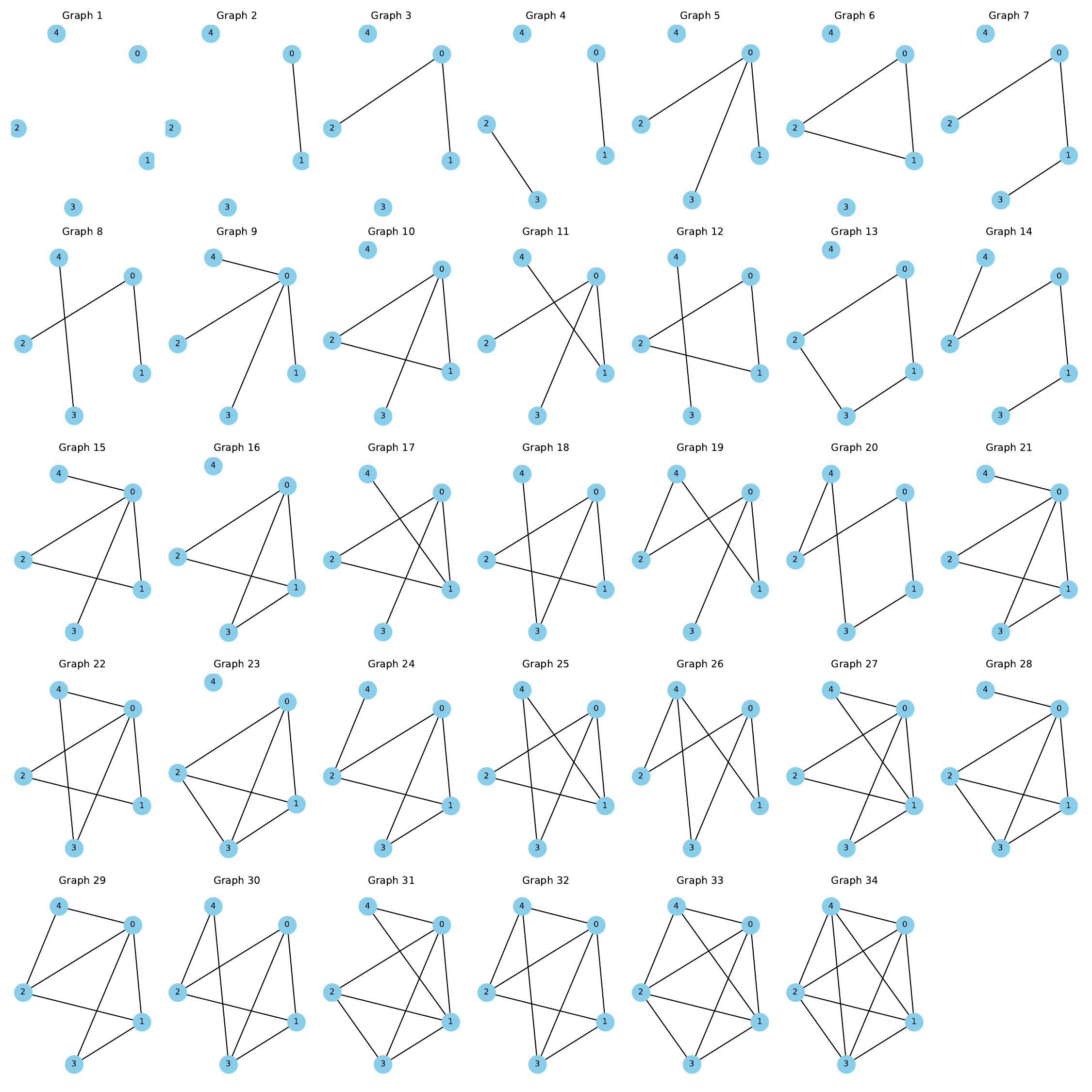}
    \caption{All non-isomorphic graphs with $5$ vertices. }
    \label{fig:graphs_n5}
\end{figure}

\begin{table}[htbp]
\centering
\rotatebox{90}{
\resizebox{\textwidth}{!}{
\begin{tabular}{c||c|c|c|c|c|c|c|c|c|c|c|c|c|c|c|c|c|c|c|c|c|c|c|c|c|c|c|c|c|c|c|c|c|c|}
  & 1 & 2 & 3 & 4 & 5 & 6 & 7 & 8 & 9 & 10 & 11 & 12 & 13 & 14 & 15 & 16 & 17 & 18 & 19 & 20 & 21 & 22 & 23 & 24 & 25 & 26 & 27 & 28 & 29 & 30 & 31 & 32 & 33 & 34 \\
\hline\hline
1 &   & 2 & 4 & 4 & 6 & 6 & 6 & 6 & 8 & 8 & 8 & 8 & 8 & 8 & 10 & 10 & 10 & 10 & 10 & 10 & 12 & 12 & 12 & 12 & 12 & 12 & 14 & 14 & 14 & 14 & 16 & 16 & 18 & 20 \\
\hline
2 &   &  & 2 & 2 & 4 & 4 & 4 & 4 & 6 & 6 & 6 & 6 & 6 & 6 & 8 & 8 & 8 & 8 & 8 & 8 & 10 & 10 & 10 & 10 & 10 & 10 & 12 & 12 & 12 & 12 & 14 & 14 & 16 & 18 \\
\hline
3 &   &  &   & 4 & 2 & 2 & 2 & 2 & 4 & 4 & 4 & 4 & 4 & 4 & 6 & 6 & 6 & 6 & 6 & 6 & 8 & 8 & 8 & 8 & 8 & 8 & 10 & 10 & 10 & 10 & 12 & 12 & 14 & 16 \\
\hline
4 &   &  &   &   & 6 & 6 & 2 & 2 & 8 & 4 & 4 & 4 & 4 & 4 & 6 & 6 & 6 & 6 & 6 & 6 & 8 & 8 & 8 & 8 & 8 & 8 & 10 & 10 & 10 & 10 & 12 & 12 & 14 & 16 \\
\hline
5 &   &  &   &   &   & 4 & 4 & 4 & 2 & 2 & 2 & 6 & 6 & 6 & 4 & 4 & 4 & 4 & 4 & 8 & 6 & 6 & 6 & 6 & 6 & 6 & 8 & 8 & 8 & 8 & 10 & 10 & 12 & 14 \\
\hline
6 &   &  &   &   &   &   & 4 & 4 & 6 & 2 & 6 & 2 & 6 & 6 & 4 & 4 & 4 & 4 & 8 & 8 & 6 & 6 & 6 & 6 & 6 & 10 & 8 & 8 & 8 & 8 & 10 & 10 & 12 & 14 \\
\hline
7 &   &  &   &   &   &   &   & 4 & 6 & 2 & 2 & 6 & 2 & 2 & 4 & 4 & 4 & 4 & 4 & 4 & 6 & 6 & 6 & 6 & 6 & 6 & 8 & 8 & 8 & 8 & 10 & 10 & 12 & 14 \\
\hline
8 &   &  &   &   &   &   &   &   & 6 & 6 & 2 & 2 & 6 & 2 & 4 & 8 & 4 & 4 & 4 & 4 & 6 & 6 & 10 & 6 & 6 & 6 & 8 & 8 & 8 & 8 & 10 & 10 & 12 & 14 \\
\hline
9 &   &  &   &   &   &   &   &   &   & 4 & 4 & 8 & 8 & 8 & 2 & 6 & 6 & 6 & 6 & 10 & 4 & 4 & 8 & 8 & 8 & 8 & 6 & 6 & 6 & 10 & 8 & 8 & 10 & 12 \\
\hline
10 &  &   &   &   &   &   &   &   &   &   & 4 & 4 & 4 & 4 & 2 & 2 & 2 & 2 & 6 & 6 & 4 & 4 & 4 & 4 & 4 & 8 & 6 & 6 & 6 & 6 & 8 & 8 & 10 & 12 \\
\hline
11 &  &   &   &   &   &   &   &   &   &   &   & 4 & 4 & 4 & 2 & 6 & 2 & 2 & 2 & 6 & 4 & 4 & 8 & 4 & 4 & 4 & 6 & 6 & 6 & 6 & 8 & 8 & 10 & 12 \\
\hline
12 &  &   &   &   &   &   &   &   &   &   &   &   & 8 & 4 & 6 & 6 & 6 & 2 & 6 & 6 & 8 & 4 & 8 & 4 & 4 & 8 & 10 & 6 & 6 & 6 & 8 & 8 & 10 & 12 \\
\hline
13 &  &   &   &   &   &   &   &   &   &   &   &   &   & 4 & 6 & 2 & 6 & 6 & 2 & 6 & 4 & 8 & 4 & 4 & 4 & 4 & 6 & 6 & 6 & 6 & 8 & 8 & 10 & 12 \\
\hline
14 &  &   &   &   &   &   &   &   &   &   &   &   &   &   & 6 & 6 & 2 & 2 & 2 & 2 & 4 & 4 & 8 & 4 & 4 & 4 & 6 & 6 & 6 & 6 & 8 & 8 & 10 & 12 \\
\hline
15 &  &   &   &   &   &   &   &   &   &   &   &   &   &   &   & 4 & 4 & 4 & 4 & 8 & 2 & 2 & 6 & 6 & 6 & 6 & 4 & 4 & 4 & 8 & 6 & 6 & 8 & 10 \\
\hline
16 &  &   &   &   &   &   &   &   &   &   &   &   &   &   &   &   & 4 & 4 & 4 & 8 & 2 & 6 & 2 & 2 & 6 & 6 & 4 & 4 & 4 & 4 & 6 & 6 & 8 & 10 \\
\hline
17 &  &   &   &   &   &   &   &   &   &   &   &   &   &   &   &   &   & 4 & 4 & 4 & 2 & 6 & 6 & 2 & 2 & 6 & 4 & 4 & 4 & 4 & 6 & 6 & 8 & 10 \\
\hline
18 &  &   &   &   &   &   &   &   &   &   &   &   &   &   &   &   &   &   & 4 & 4 & 6 & 2 & 6 & 2 & 2 & 6 & 8 & 4 & 4 & 4 & 6 & 6 & 8 & 10 \\
\hline
19 &  &   &   &   &   &   &   &   &   &   &   &   &   &   &   &   &   &   &   & 4 & 2 & 6 & 6 & 2 & 2 & 2 & 4 & 4 & 4 & 4 & 6 & 6 & 8 & 10 \\
\hline
20 &  &   &   &   &   &   &   &   &   &   &   &   &   &   &   &   &   &   &   &   & 6 & 6 & 10 & 6 & 2 & 6 & 8 & 8 & 4 & 4 & 6 & 6 & 8 & 10 \\
\hline
21 &   &  &   &   &   &   &   &   &   &   &   &   &   &   &   &   &   &   &   &   &   & 4 & 4 & 4 & 4 & 4 & 2 & 2 & 2 & 6 & 4 & 4 & 6 & 8 \\
\hline
22 &  &   &   &   &   &   &   &   &   &   &   &   &   &   &   &   &   &   &   &   &   &   & 8 & 4 & 4 & 8 & 6 & 6 & 2 & 6 & 4 & 4 & 6 & 8 \\
\hline
23 &  &   &   &   &   &   &   &   &   &   &   &   &   &   &   &   &   &   &   &   &   &   &   & 4 & 8 & 8 & 6 & 2 & 6 & 6 & 4 & 8 & 6 & 8 \\
\hline
24 &  &   &   &   &   &   &   &   &   &   &   &   &   &   &   &   &   &   &   &   &   &   &   &   & 4 & 4 & 6 & 2 & 2 & 2 & 4 & 4 & 6 & 8 \\
\hline
25 &  &   &   &   &   &   &   &   &   &   &   &   &   &   &   &   &   &   &   &   &   &   &   &   &   & 4 & 6 & 6 & 2 & 2 & 4 & 4 & 6 & 8 \\
\hline
26 &  &   &   &   &   &   &   &   &   &   &   &   &   &   &   &   &   &   &   &   &   &   &   &   &   &   & 2 & 6 & 6 & 2 & 4 & 4 & 6 & 8 \\
\hline
27 &  &   &   &   &   &   &   &   &   &   &   &   &   &   &   &   &   &   &   &   &   &   &   &   &   &   &   & 4 & 4 & 4 & 2 & 6 & 4 & 6 \\
\hline
28 &  &   &   &   &   &   &   &   &   &   &   &   &   &   &   &   &   &   &   &   &   &   &   &   &   &   &   &   & 4 & 4 & 2 & 6 & 4 & 6 \\
\hline
29 &  &   &   &   &   &   &   &   &   &   &   &   &   &   &   &   &   &   &   &   &   &   &   &   &   &   &   &   &  & 4 & 2 & 2 & 4 & 6 \\
\hline
30 &  &   &   &   &   &   &   &   &   &   &   &   &   &   &   &   &   &   &   &   &   &   &   &   &   &   &   &   &   &  & 2 & 2 & 4 & 6 \\
\hline
31 &  &   &   &   &   &   &   &   &   &   &   &   &   &   &   &   &   &   &   &   &   &   &   &   &   &   &   &   &   &   &  & 4 & 2 & 4 \\
\hline
32 &  &   &   &   &   &   &   &   &   &   &   &   &   &   &   &   &   &   &   &   &   &   &   &   &   &   &   &   &   &   &   &  & 2 & 4 \\
\hline
33 &  &   &   &   &   &   &   &   &   &   &   &   &   &   &   &   &   &   &   &   &   &   &   &   &   &   &   &   &   &   &   &  &  & 2 \\
\hline
34 &  &   &   &   &   &   &   &   &   &   &   &   &   &   &   &   &   &   &   &   &   &   &   &   &   &   &   &   &   &   &   &   &   &  \\
\hline
\end{tabular}
}
}
\caption{Performance of \eqref{eq:qap-sdp} in matching graphs with $5$ vertices. Each entry $(i,j)$ in the table represents the value of $\|\hat{X}C\hat{X}^T-D\|^2_F$, where $C$ and $D$ are adjacency matrices of graph $i$ and graph $j$ in Figure \ref{fig:graphs_n5} and $\hat{X}$ is the optimal solution to \eqref{eq:qap-sdp} with $A = C$ and $B = -D$. For clarity, zero entries in the diagonal and entries in the lower triangular portion of the table are omitted (due to symmetry).}
\label{tab:graph_n5_opt_val}
\end{table}

\end{document}